\documentclass[final,3p,times]{elsarticle}

\usepackage{algorithm}
\usepackage{algorithmic}
\usepackage{amsmath}
\usepackage{amsfonts}
\usepackage{amsmath}
\usepackage{amssymb}
\usepackage{amsthm}
\usepackage{array}
\usepackage{bm}
\usepackage{booktabs}
\usepackage[autostyle=false, style=english]{csquotes}
\usepackage{enumitem}
\usepackage{multirow}
\usepackage[short,nocomma]{optidef}     
\usepackage{orcidlink}
\usepackage{siunitx}
\usepackage{subcaption}
\usepackage{tikz}
\usetikzlibrary{positioning, calc, decorations.pathreplacing}
\usepackage{todonotes}

\usepackage[capitalise]{cleveref}

\crefname{equation}{}{}
\Crefname{equation}{Equation}{Equations}
\crefname{table}{Tab.}{Tab.}
\Crefname{table}{Tab.}{Tab.}
\crefname{section}{Sec.}{Sec.}
\Crefname{section}{Sec.}{Sec.}

\newcommand{\cI}{{\mathcal{I}}}

\newcommand{\cT}{{\mathcal{T}}}

\newcommand{\R}{\mathbb{R}}
\newcommand{\N}{\mathbb{N}}

\newcommand{\A}{\mathcal{A}}
\newcommand{\F}{\mathcal{F}}

\newcommand{\rev}[1]{{\color{black} #1}} 

\newtheorem{corollary}{Corollary}
\newtheorem{theorem}{Theorem}
\newtheorem{prbl}{Problem}
\newtheorem{lemma}{Lemma}
\newtheorem{proposition}{Proposition}
\newtheorem{assumption}{Assumption}
\newtheorem{definition}{\textbf{Definition}}

\newlist{Aenumerate}{enumerate}{1}
\setlist[Aenumerate]{label=\arabic*)}

\journal{Nonlinear Analysis: Hybrid Systems}

\begin{document}

\begin{frontmatter}

\title{Continuous-Time Constrained Linear Quadratic Regulator \\
for Switched Linear Systems}
\tnotetext[t1]{This research is partially supported by the Italian Ministry of University and Research (MUR) - Fondo Italiano per la Scienza Applicata (FISA), through the OCCAM project under Grant FISA 2023-00324 CUP I53C25000700001 and the “FoReLab" (Future-oriented Research Lab) Project (Departments of Excellence), and partially by the European Union HORIZON-MSCA-2023-SE-01-01 - MSCA Staff Exchanges 2023 Program under the Grant Agreement No.101182891 (NEUTRAWEED). \\
This is the author accepted manuscript of an article published in \textit{Nonlinear Analysis: Hybrid Systems}.
The final published version is available at:
\url{https://doi.org/10.1016/j.nahs.2025.101625}
This manuscript version is made available under the
CC BY-NC-ND 4.0 license:
\url{https://creativecommons.org/licenses/by-nc-nd/4.0/}.
}

\author[1,2]{Pietro~Gori~\orcidlink{0009-0002-4989-321X}\corref{cor1}}
\ead{pietro.gori@phd.unipi.it}
\author[1,2]{Michele~Pierallini~\orcidlink{0000-0003-0547-2747}}
\ead{michele.pierallini@phd.unipi.it}
\author[1,2]{Franco~Angelini~\orcidlink{0000-0003-2559-9569}}
\ead{frncangelini@gmail.com}
\author[1,2]{and~Manolo~Garabini~\orcidlink{0000-0002-5873-3173}}
\ead{manolo.garabini@gmail.com}
\cortext[cor1]{Corresponding author}

\affiliation[1]{organization={Centro di Ricerca ``Enrico Piaggio'', Universita' di Pisa},
            addressline={Largo Lucio Lazzarino 1}, 
            city={Pisa},
            postcode={56126}, 
            country={Italy}}
\affiliation[2]{organization={Dipartimento di Ingegneria dell'Informazione,  Universita' di Pisa},
            addressline={Via G. Caruso 56}, 
            city={Pisa},
            postcode={56122},
            country={Italy}}

\begin{abstract}
\rev{Switched systems, characterized by a family of subsystems governed by a switching rule, widely apply to complex real-world scenarios. 
However, their inherent switching dynamics pose significant challenges in phase analysis and control design. 
To address these challenges, we propose a framework for solving the Continuous-Time Constrained Linear Quadratic Regulator (CT-CLQR) problem for switched linear systems.
Our approach partitions the time horizon into a finite number of intervals, each associated with a specific system mode. 
The duration of these intervals is parameterized by the switching instants, enabling a reformulation of the problem. 
We indirectly optimize the switching sequence by fixing the switching sequence and optimizing the interval durations.
We derive analytical expressions for the cost function and its gradient, which are critical for efficient optimization. 
Unlike state-of-the-art methods that impose equality constraints on state evolution, our approach inherently considers the state evolution in the cost function. 
This not only simplifies the problem formulation but also reduces computational overhead by precomputing shared terms offline, enhancing efficiency during online operations.
The proposed method significantly advances existing techniques, offering improved computational efficiency and flexibility. 
We demonstrate the effectiveness of our approach through comprehensive numerical examples, showcasing its potential for practical applications.}

\end{abstract}




\begin{keyword}
Switched systems, optimal control, nonlinear programming, linear systems
\end{keyword}

\end{frontmatter}

\section{Introduction}
\label{sec:introduction}
A switched system is defined by a family of subsystems, i.e., modes, and a switching rule, i.e., switching sequence, 
that orchestrates the transition between each mode, see, e.g., \cite{springer, liberzon2005switched,fu2023dynamic}. 
Each instant that separates two modes is called switching instant or switching time.
These systems are commonly used in practical situations where the system's dynamics can change depending on either 
internal or external factors \cite{zhu2015optimal,katayama2023model}.

In recent years, the optimization of switched systems has been studied extensively \cite{zhu2015optimal}. 
This line of research can be divided into two branches: optimization of autonomous systems and optimal control of non-autonomous systems.
In the case of autonomous systems \cite{zhu2015optimal,egerstedt2006transition, stellato2016optimal, stellato2017second, example}, 
the optimization focuses on obtaining the optimal switching sequence, which often involves optimizing 
the switching times of a pre-specified sequence.
In this paper, we focus on the non-autonomous branch.

\rev{
A widely adopted technique is the time-scaling transformation \cite{lee1997control,loxton2014switching,teo2021applied} in conjunction
with control parameterization, wherein a free-time optimal control problem is reformulated into a fixed-time domain. 
This reformulation is achieved by introducing a new time variable that aligns switching instants with fixed nodes in the scaled
timeline. 
The control input is then approximated as a piecewise constant or polynomial function over each subinterval. 
Although this method facilitates tractable optimization, it traditionally requires that control switching occurs concurrently
with subsystem switching, a condition that may preclude optimality in some settings. 
To address this limitation, \cite{zhu2022sequential} proposed an enhanced computational approach that decouples control input switching instants from
those of subsystem transitions, thereby expanding the admissible class of control strategies and improving overall performance.
However, this approach may be limited in practical applications where there is no need to switch the control input at different times than the switching instants.
}

\rev{
Another popular approach is the two-stage optimization algorithm,
which was first introduced in \cite{opt_contr_swi} and it consists of different optimization stages.
The lower level assumes the switching sequence to be fixed and solves an optimization problem, optimizing the control input and the switching instants. 
Afterwards, the upper level minimizes the cost function associated with the switching sequence by solving another optimization problem.
An alternation between these two stages fully determines the optimal solution. 
However, the upper level of this two-stage optimization approach may be subject to a combinatorial explosion.
This is because the number of possible switching sequences grows exponentially with the number of modes.
To mitigate the challenges posed by this combinatorial explosion, several approaches
\cite{egerstedt2006transition, axelsson2005transition, axelsson2008gradient, wardi2012algorithm} focus on fixing the switching sequence and optimizing only the
control inputs and the switching instants. This simplification reduces
computational complexity while still yielding practical solutions.
Recently, \cite{zhu2024new} introduced a two-layer optimization scheme that decouples the optimization of subsystem dynamics and control switches.
Likewise, \cite{zhang2019data} demonstrates that data-driven adaptive control methods can curb the exponential growth of possible switchings
in high-dimensional linear uncertain switched systems by leveraging online estimation and approximation techniques.
}

\rev{
Alternative strategies involve mixed-integer nonlinear programming, which encodes switching decisions via binary variables. 
While globally optimal in principle, these formulations are computationally expensive.
To circumvent this, the embedding transformation methods have been proposed \cite{bengea2005optimal,wei2007applications}.
The embedding transformation aims to convert switched systems into a more extensive set of continuous systems. 
\cite{abudia2020switched} demonstrated that this relaxation allows the problem to be recast as a
continuous optimal control problem while still respecting practical constraints
such as minimum dwell time.
Leveraging these transformations, conventional optimization techniques can be employed without requiring any prior assumptions 
about either the number of switchings or the switching sequence.
Nevertheless, finding a solution to the embedded optimal control problem may lead to a suboptimal solution.
}

\rev{
When the mode sequence is held fixed, gradient-based methods become particularly effective. 
Building on \cite{wardi2015switched}, which provided explicit
expressions for the cost gradient with respect to switching instants, such
methods iteratively adjust switching times via steepest descent. 
\cite{liu2014switching,liu2018dynamic} extended this approach to include the optimization of control inputs.
In \cite{liu2014switching}, the Authors present a rigorous method that proves the existence and form of state
derivatives with respect to both parameters and switching times, even in the presence of delays.
This method is also employed on switched time-delay systems with state-dependent switching conditions in \cite{liu2018dynamic}.
To improve convergence rates and robustness, in \cite{stellato2016optimal,stellato2017second} the Authors developed a Newton-type 
algorithm that employs second-order derivatives computed via linearization over a discretized time grid.
This approach allows for the identification of shared terms between the cost
function and its derivatives, which can be computed offline.
}

\rev{
From a practical standpoint, the Constrained Linear Quadratic Regulator (CLQR) extends
the classical Linear Quadratic Regulator (LQR) to address optimal control of linear
systems while incorporating input constraints \cite{scokaert1998constrained}, making it a widely adopted
technique in control applications \cite{DEPERSIS2021109548,he2024approximate}.
The capacity of LQR and CLQR to provide nominal, closed-loop stability for unconstrained
and input-constrained linear systems, respectively, represents one of the most compelling advantages of these techniques.
Concerning continuous-time switched LQR problems, \cite{wu2018optimal} presents a method that leverages the embedding transformation method.
}

\rev{
In this work, we propose a novel method for solving the Continuous-Time switched Constrained Linear Quadratic Regulator (CLQR) problem. 
Building upon time-scaling techniques, our method introduces a unified framework that simultaneously optimizes both the control inputs and the mode durations in a
single optimization stage, without relying on other methods such as two-stage optimization or embedding transformations.
A key feature of our method lies in the identification and exploitation of shared terms within the analytical expressions of both the cost function and its gradient. 
These terms can be efficiently precomputed offline, thereby substantially reducing the computational complexity during the optimization process, 
akin to the approach in \cite{stellato2016optimal,stellato2017second}. 
By addressing the regulation problem, we incorporate control inputs directly into the optimization framework, 
overcoming challenges associated with deriving the analytical gradient of the cost function. 
}

The main contributions of this paper are summarized as follows:
\begin{Aenumerate}
    \item We propose a novel unified framework for solving the Continuous-Time CLQR problem for switched systems. 
    Our approach simultaneously optimizes both the control inputs and the mode durations in a single optimization stage, 
    eliminating the need for multi-stage methods or embedding transformations. 
    \item By leveraging matrix exponential, we reformulate the CLQR problem to explicitly highlight 
    shared terms in the analytical expressions of the cost function and its gradient. 
    These shared terms can be precomputed offline, significantly reducing the computational complexity during optimization. 
    \item \rev{We demonstrate the effectiveness of our method through numerical examples that showcase its performance in solving the Continuous-Time CLQR problem for switched systems.}
\end{Aenumerate}
The remaining of this paper is organized as follows.
In \cref{sec:problem_definition}, we present the switched systems continuous-time CLQR problem we want to address.
In \cref{sec:solution_approach}, we introduce the time grid partitioning and the parameterization of the dynamics through the duration of each partition. Moreover, we motivate the matrix exponential discretization and the formulation that explicits the shared terms in the analytical expressions of the cost function and its gradient.
In \cref{sec:numerical_solution}, we show the numerical implementation of our approach.
\Cref{sec:limitations} discusses the potential limitations of our method, including local minima and the combinatorial explosion of the switching sequence.
Then, in \cref{sec:examples}, we validate our method with some numerical examples.

\section{Problem Definition}	\label{sec:problem_definition}
Let $x(t) \in \R^n$ be the state, $u(t) \in \R^m$ be the control input, ${}^i A \in \R^{n \times n}$ \rev{be} the dynamic matrix, 
and ${}^i B \in \R^{n \times m}$ be the input matrix that represents the linear evolution of the generic active mode $i$.
Recalling \cite{liberzon2005switched}, a non-autonomous switched continuous-time linear system is defined as 
\begin{equation}	\label{eq:switched_sys}
    \begin{split}
        &\dot{x}(t) = {}^i A x(t) + {}^i B u(t) \\
        &\textnormal{with} \quad i \in \cI \triangleq \{1,\,\ldots,\,N_\textnormal{I}\}
    \end{split}
\end{equation}
where $\cI$ is the set of indices of modes, $t\in[t_0,\,t_{\textnormal{f}}]$ is the time domain, 
and $N_\textnormal{I}\in\N$ denotes the total number of modes.

Choosing the correct switching sequence in addition to the control input is crucial for efficiently controlling a switched system.
The switching sequence in $[t_0,\,t_{\textnormal{f}}]$ regulates the sequence of active modes and is defined as 
$\sigma(\cdot)~:~[t_0,\,t_{\textnormal{f}}]~\to~\cI$. 
The switching sequence is a piecewise constant function, and we assume that it has a finite number of discontinuities, 
called switching instants, which are the time instants when the transition from one mode to another occurs.
We define with $\tau = [\tau_1,\,\tau_2,\,\ldots,\, \tau_N]^{\top}$ the sequence of the switching instants, 
where $N$ indicates the total number of switches.
We enforce that $\tau \in \cT(t_0,\,t_{\textnormal{f}})$, where the set $\cT(t_0,\,t_{\textnormal{f}})$ is defined as
\begin{equation}	\label{T}
    \cT(t_0,\, t_{\textnormal{f}}) \triangleq \{\tau = [\tau_1,\, \ldots,\, \tau_N]^{\top} \,|\,t_0 \leq \tau_1 \leq \ldots \leq \tau_{N} \leq t_{\textnormal{f}} \}.
\end{equation}
Indicating with $i$ the generic mode and with $k= 1, \ldots, N$ the time instant when the transition occurs, 
then $i_k$ denotes that the $i$-th dynamics remains active along the time 
interval $\left[\tau_k,\, \tau_{k+1}\right]$.
\rev{
An implicit outcome of \cref{eq:switched_sys} is that the state \textit{x} remains continuous and only $\dot{x}$ can change abruptly when the switching occurs; thus, we state the following assumption:
\begin{assumption} \label{assumption_1}
    The state of the system is continuous at the switching instants, i.e., $\lim_{t\rightarrow\tau_k^-} x(t) = \lim_{t\rightarrow\tau_k^+} x(t)$ for $k = 1, \ldots, N$.
\end{assumption}
This assumption is a common assumption in the literature on switched systems \cite{liberzon2005switched, stellato2017second}.
}
Solving the Continuous-Time CLQR problem for switched systems involves not only
determining the optimal control input $u$ that minimizes the cost function but
also optimizing the switching sequence $\sigma$. 
The timing of the switching instants plays a pivotal role in identifying the optimal switching sequence. 
As highlighted in \cite{stellato2016optimal, stellato2017second, opt_contr_swi}, the switching sequence 
can be parameterized in terms of the switching instants.
Consequently, optimizing the switching instants inherently optimizes the switching sequence, 
integrating both aspects into a unified optimization framework.
We can then define the optimization problem as follows:
\begin{prbl} \label{problem_1}
    \rev{
    Let us consider a switched system in the form of \cref{eq:switched_sys}, an initial and a final time, namely $t_0$ and $t_\textnormal{f}$, 
    a predefined maximum number of switchings $N$, and a fixed mode sequence with $N_{\textnormal{I}}$ modes. 
    Determine, if it exists, a continuous function $u^* : [t_0, t_\textnormal{f}] \to \R^m$ and a switching instant sequence $\tau^*$ that solves the continuous-time optimization problem $\mathcal{P}^{C}_{x_0}$ defined as
    \begin{mini!}
        {u, \tau}{\frac{1}{2} \int_{t_0}^{t_\textnormal{f}}  \left(x^{\top}(t) \Tilde{Q} x(t) + u^{\top}Ru(t) \right)\,  \textnormal{d}t + \frac{1}{2} x^\top(T) \Tilde{E} x(T) \protect\label{eq:obj_finite_dim_prob}}{\label{eq:finite_dim_prob}}{}
        \addConstraint{\dot{x}(t)}{= {}^i A x(t) + {}^i B u(t) {\quad \forall t \in \left[ \tau_{k-1}, \, \tau_k \right]} {\quad i = k - N_{\textnormal{I}} \left \lfloor \frac{k}{N_{\textnormal{I}}} \right \rfloor} {\quad k = 0, \, \ldots, \, N,  \protect\label{eq:c1_finite_dim_prob}}}
        \addConstraint{x(0)}{= x_0, \protect\label{eq:c2_finite_dim_prob}}
        \addConstraint{u(t)}{\in \mathcal{U},  \protect\label{eq:c3_finite_dim_prob}}
        \addConstraint{\tau}{\in \cT(t_0,t_{\textnormal{f}}),
        \protect\label{eq:c4_finite_dim_prob}}
    \end{mini!}
where $\mathcal{U} \triangleq \{u \in \R^m \, | \, u_{\textnormal{min}} \leq u \leq u_{\textnormal{max}} \}$, where $u_{\textnormal{min}}$, $u_{\textnormal{max}} \in \R^m$ are the lower and upper bounds of the control input, respectively.
}
\end{prbl}
The cost function~\cref{eq:obj_finite_dim_prob} minimizes the state vector's distance from the origin and the control input over the time horizon, considering $\Tilde{Q}$ as an $n \times n$ positive semidefinite matrix, and $R$ is an $m \times m$ positive definite matrix.
The final state's displacement from the origin is further penalized by the terminal cost, where $\Tilde{E}$ is an $n \times n$ positive definite matrix.
\Cref{eq:c1_finite_dim_prob} represents the system's dynamics, embedding the switching sequence.
\rev{In \cref{eq:c1_finite_dim_prob}, $i$ is the index of the active mode. 
The modulo operator ensures the fixed mode sequence, where $k$ is the index of the previous
switching instant, and $N_{\textnormal{I}}$ is the number of modes.}
Furthermore, considering the constraint \cref{eq:c4_finite_dim_prob} is included to ensure adherence to the conditions outlined in \cref{T}. 

\section{Solution Proposed}	
\label{sec:solution_approach}	
In this section, we present a novel framework for solving \cref{problem_1} leveraging a time partitioning formulation,
which enables us to compute the cost function \cref{eq:obj_finite_dim_prob} and its gradient analytically.
With this approach, we aid the optimization solver in finding the optimal solution more efficiently.
Moreover, as introduced in \cref{sec:problem_definition}, we parameterize the switching sequence as a function of the partition's duration.
This methodology provides a systematic approach to solving \cref{problem_1},
facilitating the simultaneous optimization of the switching sequence and partition duration.

\subsection{Preliminaries} \label{preliminaries}
    We partition the time horizon $\left[ t_0,\, t_\textnormal{f} \right]$ into a sequence of $N + 1$ intervals,
    with $N$ being the number of switching instants, where each interval is defined 
    by its duration $\delta_k = \tau_{k+1} - \tau_k$, with $\tau_k$ and $\tau_{k+1}$ representing consecutive switching instants. 
    The duration $\delta_k$ corresponds to the time spent in each system mode. 
    Consequently, as per \cref{eq:switched_sys}, the index $i$ of the tuple $\left({}^i A, \,{}^i B \right)$ changes 
    at the boundaries of these intervals, i.e., when transitioning from one partition to the next. 
    To formalize this, we define the following set:
    \rev{
    \begin{equation} \label{Delta}
        \Delta(t_0,\,t_{\textnormal{f}}) \triangleq \left\{ \delta \in \R^{N+1}\,|\, \delta_k \geq 0 \; \textnormal{and} \; \sum_{k=0}^{N} \delta_k = t_{\textnormal{f}} - t_0 \right\},
    \end{equation}
    }
    which ensures that the duration of each partition is non-negative and collectively spans the entire 
    time horizon $\left[ t_{0},\, t_{\textnormal{f}}\right]$.
    \rev{
    It is important to emphasize that our method does not impose a fixed number of switching instants. 
    By treating the duration of each mode, $\delta_k$, as an optimization variable and enforcing the condition $\delta_k \geq 0$, 
    we inherently allow the number of switching instants to vary. 
    Specifically, if the optimal solution yields $\delta_k = 0$, the corresponding mode $i_k$ is effectively excluded 
    from the optimal control strategy.
    }
    Leveraging the partitioning framework, we reformulate \cref{problem_1} to focus on 
    optimizing the partition duration rather than the switching instants. 
    This reformulation enables a unified approach for the simultaneous optimization of both control inputs and 
    partition duration.
    \rev{To streamline the presentation of the method in the subsequent sections, 
    we introduce the following assumption:
    \begin{assumption} \label{assumption_2}
        The control input $u(t)$ is piecewise constant, i.e.,
        \begin{equation}
        \label{eq:piecewise_input}u(t) = u_{k}, \, t \in \left[\tau_{k},\, \tau_{k+1}
        \right], \, k = 0,\, \ldots,\, N_{\textnormal{I}},
    \end{equation}
    where $u_k \in \R^{m}$ is the control input associated with the $k$-th partition.
    \end{assumption}
    This assumption enforces that the control input remains constant within each partition, 
    which simplifies the state evolution analysis.}
    \rev{Recalling \cref{eq:switched_sys,Delta,assumption_2}, the state evolution within each partition, given the initial state $x_k$, 
    is entirely determined by the pair $\left(u_k, \, \delta_k\right)$. 
    These considerations are pivotal for the derivation of the proposed method.}
    Concerning $\sigma(t)$, we implement a predefined switching sequence.
    Let us consider the $N_\textnormal{I}$ modes of \cref{eq:switched_sys}. 
    We define the switching sequence as the extended sequence that repeats the canonical sequence, namely,
    $i = 1, \, 2, \, \ldots , \, N_{\textnormal{I}}$, $n_\textnormal{c}$ times, with $n_\textnormal{c} \in \N_{ > 0} $.
    For a more detailed understanding of the structure of the switching sequence, the reader is encouraged 
    to consult \cref{fig:predefined_sequence}.
    \begin{figure}[t]
        \centering
        \begin{tikzpicture}
            \draw[->] (-4.2,1.5) -- (10.2,1.5);

            \draw (-4.0,0.1+1.5) -- (-4.0,-0.1+1.5) node[below] {$t_{0}$};
            \node[above] at (-4+1, 1.6) {${}^{1} A, {}^{1} B$};
            \node[above] at (-4+1, 0.9) {\textcolor{blue}{$A_1, B_1$}};
            \draw (-4.0+2,0.1+1.5) -- (-4.0+2,-0.1+1.5) node[below] {$\tau_{1}$};
            \node[above] at (-4.0+3, 1.6) {${}^{2} A, {}^{2} B$};
            \draw (-4.0+4,0.1+1.5) -- (-4.0+4,-0.1+1.5) node[below] {$\tau_{2}$};
            \node[above] at (-4.0+4.5, 1.6) {$\ldots$};
            \draw (-4.0+5,0.1+1.5) -- (-4.0+5,-0.1+1.5)
                node[below] {$\tau_{N_\textnormal{I}}$};
            \node[above]
                at
                (-4.0+6, 1.6)
                {${}^{N_\textnormal{I}} A, {}^{N_\textnormal{I}} B$};
            \draw (-4.0+7,0.1+1.5) -- (-4.0+7,-0.1+1.5)
                node[below] {$\tau_{N_\textnormal{I}+1}$};
            \node[above] at (-4.0+8, 1.6) {$\ldots$};
            \draw (-4.0+9,0.1+1.5) -- (-4.0+9,-0.1+1.5)
                node[below] {$\tau_{(n_c-1)(N_\textnormal{I}+1)}$};
            \node[above] at (-4.0+9.8, 1.6) {${}^{1} A, {}^{1} B$};
            \draw (-4.0+10.5,0.1+1.5) -- (-4.0+10.5,-0.1+1.5);
            \node[above] at (-4.0+11.3, 1.6) {$\ldots$};
            \draw (-4.0+12,0.1+1.5) -- (-4.0+12,-0.1+1.5) node[below] {$\tau_{N}$};
            \node[above] at (-4+13, 1.6) {${}^{N_{\textnormal{I}}} A, {}^{N_{\textnormal{I}}} B$};
            \node[above] at (-4+13, 0.9) {\textcolor{blue}{$A_{N}, B_{N}$}};
            \draw (-4.0+14,0.1+1.5) -- (-4.0+14,-0.1+1.5)
                node[below] {$t_{\textnormal{f}}$};

            \draw[decorate, decoration={brace, mirror, amplitude=5pt}, yshift=-15pt]
                (-2,1.5) -- (0,1.5)
                node[midway, below=6pt] {$\delta_1$};
            \draw[decorate, decoration={brace, mirror, amplitude=5pt}, yshift=-15pt]
                (1,1.5) -- (3,1.5)
                node[midway, below=6pt] {$\delta_{N_{\textnormal{I}}}$};
        \end{tikzpicture}
        \caption{The figure illustrates the construction process of the predetermined sequence, 
        which involves repeating the canonical sequence by $n_{\textnormal{c}}$ times.}
        \label{fig:predefined_sequence}
    \end{figure}
    \rev{This procedure ensures that the optimized switching sequence explores a diverse set of mode combinations, 
    thereby enhancing the solution's robustness and adaptability. 
    While the proposed approach does not entirely resolve the combinatorial explosion challenge, 
    it provides a practical and scalable framework to mitigate its impact \cite{zhu2024new}.
    Notably, the fixed switching sequence condition is relaxed by permitting $\delta_{k} = 0$, 
    which implies $\tau_{k}= \tau_{k+1}$. 
    This allows the switching sequence to be dynamically adjusted by directly transitioning from $(i-1)_{k-1}$ to $(i+1)_{k+1}$,
    effectively skipping intermediate modes.}
    \rev{To enhance clarity and maintain consistent notation, we associate each vector and matrix quantity with the corresponding time partition. 
    For instance, the vector $x_{k}$ and the matrix $A_{k}$ are linked to the $k$-th partition of duration $\delta_{k}$. 
    Referring to \cref{fig:predefined_sequence}, there exists a unique correspondence between the partition index $k$ and the active mode $i$. 
    Specifically, for cases where $N > N_{\textnormal{I}}$, the active mode $i$ can be determined as $i = k - N_{\textnormal{I}} \left\lfloor \frac{k}{N_{\textnormal{I}}}\right\rfloor$. 
    This systematic association simplifies the interpretation of the switching sequence and its relationship with the time partitions. }   
    Moreover, for a given switching instant $\tau_k$, we use the following state notation $x(\tau_k) = x_k$.
    \rev{Based on the time partitioning previously introduced and on \cref{assumption_2}, we can rewrite \cref{eq:finite_dim_prob} highlighting
    the pair $\left(u_k, \, \delta_k\right)$ as the optimization variables, as follows:
    
    
    \begin{mini!}
        {\substack{u_0, \ldots, u_N \\ \delta_0, \ldots, \delta_N}}{\frac{1}{2} \sum_{k=0}^{N} \left( \int_{0}^{\delta_{k}} \left( x(t)^\top \tilde{Q} x(t)+ u_{k}^\top R u_k \right) \textnormal{d}t \right) + \frac{1}{2} x(t_{\textnormal{f}})^\top \tilde{E} x(t_{\textnormal{f}}) \protect\label{eq:obj_finite_dim_prob_2}}{\label{eq:finite_dim_prob_2}}{}
        \addConstraint{\dot{x}(t)}{= A x(t) + B u_k,} {\quad \forall t \in [0, \delta_k],}{\quad \forall k = 0, \ldots, N,  \protect\label{eq:c1_finite_dim_prob_2}}
        \addConstraint{x(0)}{= x_0,  \protect\label{eq:c2_finite_dim_prob_2}}
        \addConstraint{u_{\textnormal{min}}}{\leq u_k \leq u_{\textnormal{max}}}{\quad \forall k = 0, \ldots, N,  \protect\label{eq:c3_finite_dim_prob_2}}
        \addConstraint{\delta_k}{\geq 0}{\quad \forall k = 0, \ldots, N, \protect\label{eq:c4_finite_dim_prob_2}}
        \addConstraint{\sum_{k=0}^N \delta_k}{= t_{\textnormal{f}} - t_0,  \protect\label{eq:c5_finite_dim_prob_2}}
    \end{mini!}
    where \cref{eq:c4_finite_dim_prob_2,eq:c5_finite_dim_prob_2} ensure that the duration of all partitions fulfill \cref{Delta}.
    This formulation allows simultaneous optimization of the control inputs, the duration of each mode, 
    and the adjustment of the switching sequence. 
    By leveraging a time-scaling-like transformation \cite{loxton2014switching,lin2013optimal}, 
    we ensure computational efficiency and flexibility in handling complex switching dynamics.
    }
    
    
    \subsection{State-Transition Matrix}	
    \label{state_transition}
    Here, we introduce the matrix exponential parameterization used to discretize the continuous-time CLQR. 
    
    Let us consider any linear system for which the state evolution is described as in \cite{bakshi2020modern}.
    Recalling \cref{eq:switched_sys,assumption_2}, we choose two generic switching instants, i.e.,  
    $\tau_k $ and $\tau_\ell$, where $ \tau_k < \tau_\ell $, in accordance with the time partitioning outlined in \cref{preliminaries}.
    Then, we can write the state transition between those switching instants.
    \begin{equation} \label{eq:lagrange_formula}
        x_{\ell}=\prod_{q=k}^{\ell-1} e^{A_q\delta_q} x_k + \sum_{q=k}^{\ell-1} \left(\prod_{s=q+1}^{\ell-1} e^{A_s\delta_s} \right) \Phi_{\textnormal{F}_q} u_q,
    \end{equation}
    where
    \begin{equation} \label{eq:phi_f_i}
        \Phi_{\textnormal{F}_q} = \int_{0}^{\delta_{q}}e^{A_q(\delta_{q}-t)}B_q \, \textnormal{d}t \in \R^{n \times m}.
    \end{equation}
    One can decompose \cref{eq:lagrange_formula} into two terms: one representing the autonomous evolution and the other representing the forced evolution, namely $\Phi_\textnormal{A}$ and $\Phi_\textnormal{F}$.
    \begin{align}
        \rev{\Phi_\textnormal{A}(\tau_\ell, \, \tau_k)} \; & \rev{=\prod_{q=k}^{\ell-1} e^{A_q \delta_q} \in \R^{n \times n}, \label{phi_a} }\\
        \Phi_\textnormal{F}(\tau_\ell, \, \tau_k) &= \sum_{q=k}^{\ell-1}\left(\prod_{s=q+1}^{\ell-1} e^{A_s \delta_s} \right) \Phi_{F_q} u_q \in \R^{n \times 1}, \label{phi_f}
    \end{align}
    \rev{where the parentheses indicate the time intervals of the state evolution.
    For the sake of clarity, with this notation, we can represent the state evolution across
    multiple time partitions.}
    To encapsulate the complete information of the state coming from \cref{eq:lagrange_formula}, 
    we define the state-transition matrix $\Phi(\tau_\ell, \, \tau_k) \in \R^{(n+1)\times(n+1)}$ as follows
    \begin{equation} \label{eq:transition_matrix}
        \Phi(\tau_\ell, \, \tau_k) = 
        \begin{bmatrix}
            \Phi_\textnormal{A}(\tau_\ell, \, \tau_k) & \Phi_\textnormal{F}(\tau_\ell, \, \tau_k) \\
            \underline{0} & 1
        \end{bmatrix}.
    \end{equation}
    In order to keep a compact notation, we augment the state as $\overline{x}(t) = \left[x(t)^\top,\,1\right]^{\top}$.
    This leads us to propagate the augmented state as follows
    \begin{equation} \label{state}
        \overline{x}_{\ell} = \Phi(\tau_\ell, \, \tau_k) \overline{x}_k.
    \end{equation}
    In a different perspective, we can consider \crefrange{eq:lagrange_formula}{eq:transition_matrix} to be a discretization of \cref{eq:switched_sys} on the time partitioning introduced in \cref{preliminaries}.
    Thus, we can compute the equivalent discrete state evolution as: $x_{k+1} = \Phi_\textnormal{A}(\tau_{k+1}, \tau_k) x_k + \Phi_\textnormal{F}(\tau_{k+1}, \tau_k)$.
    \rev{The state-transition matrix is explicitly parameterized by the duration of each partition, $\delta_{k}$, 
    which plays a pivotal role in the optimization process.
    This parameterization enables the analytical computation of both the cost function
    and its gradient with respect to the control inputs and partitions duration.
    Furthermore, it facilitates the dynamic adjustment of the switching sequence by optimizing
    the duration of each partition. 
    This unified approach allows for the simultaneous optimization of control inputs and partitions duration, 
    streamlining the overall optimization process into a cohesive framework.} 

    The state-transition matrix defined in \cref{eq:transition_matrix} satisfies the two following lemmas.
    \begin{lemma} \label{lemma_1}
        Given two switching instants $\tau_a$ and $\tau_\ell$ with $\tau_a \leq \tau_\ell$, it holds that
        \begin{equation} 
            \label{composition_rule}
            \Phi(\tau_\ell, \, \tau_a) =  \Phi(\tau_\ell, \, \tau_k) \Phi(\tau_k, \, \tau_a)\,.
        \end{equation}
    \end{lemma}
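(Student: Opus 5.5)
Here $\tau_k$ is understood to be an intermediate switching instant, $\tau_a \le \tau_k \le \tau_\ell$, i.e.\ $a \le k \le \ell$. The identity is then a semigroup (composition) property, and I plan to prove it by multiplying the block matrices in \cref{eq:transition_matrix} and comparing blocks with the definitions \cref{phi_a} and \cref{phi_f}.

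The product of two matrices of the form \cref{eq:transition_matrix} is
\begin{equation*}
\Phi(\tau_\ell,\tau_k)\Phi(\tau_k,\tau_a)=
\begin{bmatrix}
\Phi_\textnormal{A}(\tau_\ell,\tau_k)\Phi_\textnormal{A}(\tau_k,\tau_a) & \Phi_\textnormal{A}(\tau_\ell,\tau_k)\Phi_\textnormal{F}(\tau_k,\tau_a)+\Phi_\textnormal{F}(\tau_\ell,\tau_k)\\
\underline{0} & 1
\end{bmatrix}.
\end{equation*}
The bottom row already agrees with that of $\Phi(\tau_\ell,\tau_a)$, so it remains to check the two upper blocks.

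\textbf{Upper-left block.} Interpret the product in \cref{phi_a} as ordered, with later indices on the left, as the state propagation requires. Then
\[
\Phi_\textnormal{A}(\tau_\ell,\tau_k)\Phi_\textnormal{A}(\tau_k,\tau_a)=\prod_{q=k}^{\ell-1}e^{A_q\delta_q}\prod_{q=a}^{k-1}e^{A_q\delta_q}=\prod_{q=a}^{\ell-1}e^{A_q\delta_q}=\Phi_\textnormal{A}(\tau_\ell,\tau_a).
\]
The only point needing care is this ordering convention, because the factors $e^{A_q\delta_q}$ need not commute. I will state the convention explicitly. An empty product, as occurs when $k=a$ or $k=\ell$, is the identity.

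\textbf{Upper-right block.} This is the main step. Split the sum defining $\Phi_\textnormal{F}(\tau_\ell,\tau_a)$ at $q=k$:
\[
\Phi_\textnormal{F}(\tau_\ell,\tau_a)=\sum_{q=a}^{k-1}\Big(\prod_{s=q+1}^{\ell-1}e^{A_s\delta_s}\Big)\Phi_{\textnormal{F}_q}u_q+\sum_{q=k}^{\ell-1}\Big(\prod_{s=q+1}^{\ell-1}e^{A_s\delta_s}\Big)\Phi_{\textnormal{F}_q}u_q .
\]
The second sum is exactly $\Phi_\textnormal{F}(\tau_\ell,\tau_k)$. In the first sum $q\le k-1$, so the ordered product factors as
\[
\prod_{s=q+1}^{\ell-1}e^{A_s\delta_s}=\Big(\prod_{s=k}^{\ell-1}e^{A_s\delta_s}\Big)\Big(\prod_{s=q+1}^{k-1}e^{A_s\delta_s}\Big)=\Phi_\textnormal{A}(\tau_\ell,\tau_k)\prod_{s=q+1}^{k-1}e^{A_s\delta_s}.
\]
Pulling $\Phi_\textnormal{A}(\tau_\ell,\tau_k)$ out of the first sum leaves $\Phi_\textnormal{A}(\tau_\ell,\tau_k)\Phi_\textnormal{F}(\tau_k,\tau_a)$. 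The two pieces together match the upper-right block of the product, which proves \cref{composition_rule}.

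\textbf{Cross-check.} Alternatively, the identity follows from \cref{state}: $\overline{x}_\ell=\Phi(\tau_\ell,\tau_k)\overline{x}_k=\Phi(\tau_\ell,\tau_k)\Phi(\tau_k,\tau_a)\overline{x}_a$. Since $\Phi_\textnormal{F}$ depends on the inputs $u_q$ and not on the state, equality for all $x_a$ forces both blocks to coincide. The direct block computation above is nonetheless cleaner and self-contained.
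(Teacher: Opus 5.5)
Your proof is correct, but your main argument takes a different route from the paper's. Both start from the same block multiplication. The paper then argues through the dynamics: it writes $x_k$ in terms of $x_a$ and $x_\ell$ in terms of $x_k$ via the Lagrange formula \cref{eq:lagrange_formula}, substitutes one into the other, and reads off that the composed propagation from $x_a$ to $x_\ell$ has exactly the blocks of the product. That is essentially your cross-check. Your primary argument instead verifies the two upper blocks directly from the closed forms \cref{phi_a} and \cref{phi_f}, by concatenating ordered products and splitting the forced-response sum at $q=k$.

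Your route gives a self-contained, purely algebraic proof. It does not need \cref{eq:lagrange_formula} to describe the true trajectory between arbitrary pairs of instants. It also makes explicit the left-multiplication ordering forced by the non-commuting factors $e^{A_q\delta_q}$, which the paper leaves implicit, and it covers the empty-product cases $k=a$ and $k=\ell$.

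The paper's route is more conceptual: it is the flow property inherited from uniqueness of solutions, so it would survive any change in how $\Phi$ is written out. However, it leaves unstated the step your cross-check supplies, namely that agreement of the affine maps $x_a \mapsto \Phi_\textnormal{A}x_a+\Phi_\textnormal{F}$ for every $x_a$ forces both blocks to coincide. Taking $x_a=0$ matches the $\Phi_\textnormal{F}$ blocks, and then the linear parts must agree.
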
	 
    \begin{proof}
        The proof is in \ref{proof:lemma_1_2}.
    \end{proof}

    \begin{lemma} \label{lemma_2}
        Given two switching instants $\tau_a$ and $\tau_{k+1}$ with $\tau_a \leq \tau_{k+1}$, the derivatives of $\Phi(\tau_{k+1}, \, \tau_a)$ 
        w.r.t. the switching interval, i.e., $\delta_k$,  and the control input, i.e., $u_{kj}$, namely, the $j$-th component 
        of the control input $u_k$, can be written as:
        \begin{equation} \label{lemma_1_eq}
            \frac{\partial \Phi(\tau_{k+1}, \, \tau_a)}{\partial \delta_k} =  M_k \Phi(\tau_{k+1}, \, \tau_a),
        \end{equation}
        where 
        \begin{equation} \label{M_matrix}
            M_k = 
            \begin{bmatrix}
                A_k & B_k u_k(\tau_{k+1}) \\
                \underline{0} & 0
            \end{bmatrix}
            \in \R^{(n+1)\times(n+1)}\,;
        \end{equation}
        and
        \begin{equation} \label{lemma_2_eq}
            \frac{\partial \Phi(\tau_{k+1}, \, \tau_a)}{\partial u_{kj}} =  H_{kj} \Phi(\tau_{k+1}, \, \tau_a),
        \end{equation}
        where
        \rev{
        \begin{equation} \label{H_matrix}
            H_{kj} =
            \begin{bmatrix}
                \underline{0} & \Phi_{F_k}^j \\
                \underline{0} & 0
            \end{bmatrix}
            \in \R^{(n+1)\times(n+1)}.
        \end{equation}
        with $\Phi_{F_k}^j$ being the $j$-th column of the matrix $\Phi_{F_k}$ introduced in \cref{eq:phi_f_i}.}
    \end{lemma}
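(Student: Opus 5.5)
The plan is to reduce everything to the single-partition transition matrix by means of \cref{lemma_1}. Since $\tau_a \leq \tau_k \leq \tau_{k+1}$, the composition rule gives
\begin{equation*}
\Phi(\tau_{k+1},\,\tau_a) = \Phi(\tau_{k+1},\,\tau_k)\,\Phi(\tau_k,\,\tau_a).
\end{equation*}
By \cref{phi_a,phi_f}, the factor $\Phi(\tau_k,\,\tau_a)$ depends only on $\delta_q$ and $u_q$ with $q \leq k-1$. It is therefore constant with respect to both $\delta_k$ and $u_{kj}$, and both derivatives act only on the one-step factor:
\begin{equation*}
\Phi(\tau_{k+1},\,\tau_k) = \begin{bmatrix} e^{A_k\delta_k} & \Phi_{F_k} u_k \\ \underline{0} & 1 \end{bmatrix}.
\end{equation*}
It thus suffices to show that $\partial \Phi(\tau_{k+1},\tau_k)/\partial \delta_k = M_k \Phi(\tau_{k+1},\tau_k)$ and $\partial \Phi(\tau_{k+1},\tau_k)/\partial u_{kj} = H_{kj}\Phi(\tau_{k+1},\tau_k)$. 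Right-multiplying by $\Phi(\tau_k,\tau_a)$ and applying \cref{lemma_1} again then yields \cref{lemma_1_eq,lemma_2_eq}.

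For the $\delta_k$ derivative, I will differentiate blockwise. The upper-left block gives $A_k e^{A_k\delta_k}$. For the upper-right block I first change variables $s=\delta_k - t$ in \cref{eq:phi_f_i}, which gives $\Phi_{F_k} = \int_0^{\delta_k} e^{A_k s}B_k\,\textnormal{d}s$. Hence $\partial \Phi_{F_k}/\partial\delta_k = e^{A_k\delta_k}B_k$. The key identity is
\begin{equation*}
e^{A_k\delta_k}B_k = A_k\Phi_{F_k} + B_k,
\end{equation*}
which follows from $A_k\int_0^{\delta_k} e^{A_k s}\,\textnormal{d}s = e^{A_k\delta_k} - I$; this holds without assuming $A_k$ invertible, by integrating $\tfrac{\textnormal{d}}{\textnormal{d}s}e^{A_k s}$. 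The derivative of the one-step matrix is then
\begin{equation*}
\begin{bmatrix} A_k e^{A_k\delta_k} & A_k\Phi_{F_k}u_k + B_k u_k \\ \underline{0} & 0\end{bmatrix}.
\end{equation*}
Direct block multiplication shows this equals $M_k\Phi(\tau_{k+1},\tau_k)$, with $u_k(\tau_{k+1}) = u_k$ by \cref{assumption_2}.

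For the $u_{kj}$ derivative, only the block $\Phi_{F_k}u_k$ depends on $u_{kj}$, and its derivative is the $j$-th column $\Phi_{F_k}^j$. Computing $H_{kj}\Phi(\tau_{k+1},\tau_k)$ blockwise gives upper-left block $\underline{0}\,e^{A_k\delta_k}=0$ and upper-right block $\underline{0}\,\Phi_{F_k}u_k + \Phi_{F_k}^j\cdot 1 = \Phi_{F_k}^j$. This matches the derivative.

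The main obstacle I expect is the identity $e^{A_k\delta_k}B_k = A_k\Phi_{F_k}+B_k$, which is exactly what makes the derivative factor through $\Phi$ on the left. A second point needs care: the degenerate case $\tau_a=\tau_{k+1}$ as a partition index, where $\Phi$ is the identity and does not depend on $\delta_k$. I will state that the lemma is meant for $a \leq k$, i.e.\ partition $k$ lies in the propagated range; this is the reading under which \cref{lemma_1} is invoked above.
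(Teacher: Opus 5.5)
Your proof is correct and follows essentially the paper's argument. Like the paper, you isolate the $k$-th partition, differentiate blockwise, and use $\partial \Phi_{F_k}/\partial\delta_k = A_k\Phi_{F_k}+B_k$. The differences are minor: you factor once at the level of the augmented matrix via \cref{lemma_1}, whereas the paper factors $\Phi_\textnormal{A}$ and differentiates $\Phi_\textnormal{F}$ term by term as a sum over $q$. You also obtain the key identity from the fundamental theorem of calculus together with $A_k\int_0^{\delta_k}e^{A_k s}\,\textnormal{d}s = e^{A_k\delta_k}-I$, where the paper uses Leibniz's rule. Your restriction to $a\le k$ is warranted: for $a=k+1$ one has $\Phi=I$, the derivatives vanish, and the stated identities would fail unless $M_k$ (resp.\ $H_{kj}$) is zero. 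The paper's proof silently excludes this case.
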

    
    \begin{proof}
        The proof of Lemma \ref{lemma_2} is in \ref{proof:lemma_1_2}.
    \end{proof}
        
    The state-transition matrix allows us to propose the following definitions, which are essential for establishing the main results of this study.
    \begin{definition} \label{definition_1}
        Given the switching instant $\tau_k\in[t_0,\, t_\textnormal{f}]$, we define the symmetric and positive semidefinite matrix $P_k \in \R^{(n+1)\times (n+1)}$ as
        \begin{equation} \label{P_matrix}
            P_k \triangleq \frac{1}{2} \int_{\tau_k}^{t_{\textnormal{f}}} \Phi(t, \tau_k)^\top Q \Phi(t, \tau_k) dt,
        \end{equation}
        where $\Phi(t, \tau_k)$ is the state-transition matrix \cref{eq:transition_matrix}, and $Q = \textnormal{blkdiag}(\Tilde{Q},\,0)$.
        We also define the symmetric and positive semidefinite matrix $F_k \in \R^{(n+1)\times (n+1)}$~as
        \begin{equation} \label{F_matrix}
            F_k \triangleq \frac{1}{2} \Phi(t_{\textnormal{f}}, \tau_k)^\top E \Phi(t_{\textnormal{f}}, \tau_k),
        \end{equation}
        where $E = \textnormal{blkdiag}(\Tilde{E},\,0)$.
        Finally, we define the sum of these two matrices as
        \begin{equation} \label{S_matrix}
            S_k \triangleq P_k + F_k, \quad k = 0,\, \ldots,\, N+1. 
        \end{equation}
        We also define the scalar term $G_k \in \R$ as
        \begin{align} 
            G_k &\triangleq \frac{1}{2} \int_{\tau_k}^{t_{\textnormal{f}}} u(t)^\top R u(t) \, \textnormal{d}t \label{G} \\
            &= \frac{1}{2} \sum_{q=k}^{N} (u_q^\top R u_q) \delta_q. \label{second_term}
        \end{align}
    \end{definition}
    Note that \eqref{second_term} is obtained recalling \cref{assumption_2}, i.e., considering the input as a piecewise constant function.
    
    \begin{definition} \label{definition_2}
        Given the matrix $H_{kj}$ \eqref{H_matrix}, we define the matrix $D_{kj} \in \R^{(n+1) \times (n+1)}$ as
        \begin{equation} \label{D_matrix}
            D_{kj} \triangleq \frac{1}{2} \int_{\tau_k}^{\tau_{k+1}} H_{kj}^\top(t, \tau_k)Q \Phi(t,\tau_k) + \Phi^\top(t,\tau_k)QH_{kj}(t, \tau_k) \, \textnormal{d}t,
            \quad k = 0,\, \cdots,\, N, \quad j=1,\, \ldots,\, m,
        \end{equation}
        where \textit{j} denotes each component of the \textit{k}-th control input and $\Phi(t, \tau_k)$ is the state-transition matrix \cref{eq:transition_matrix}.
        Recalling \cref{lemma_2,definition_1}, we define the matrix $C_k \in \R^{(n+1) \times (n+1)}$ as
        \begin{equation} \label{C_matrix}
            C_k \triangleq \frac{1}{2} Q + M_k^\top S_{k+1} + S_{k+1} M_k, \quad k = 0,\, \ldots,\, N.
        \end{equation}
        Finally, given matrices $H_{kj}$, i.e., \eqref{H_matrix} and $S_k$, we define the matrix $N_{kj} \in \R^{(n+1) \times (n+1)}$ as
        \begin{equation} \label{N_matrix}
            N_{kj} \triangleq H_{kj}^\top S_{k+1} + S_{k+1} H_{kj}, \quad k = 0,\, \cdots,\, N, \quad j=1,\, \ldots,\, m.
        \end{equation}
    \end{definition}
    
    For the sake of clarity, \cref{definition_1,definition_2} resemble the matrices in \cite{stellato2017second}. 
    However, it is worth highlighting that here, the matrices depend on both $u$ and $\delta$.

    \subsection{Cost Function and Gradient}	\label{sec:cost_function_and_gradient}
    Using the state-transition matrix discretization and the shared matrices presented in \cref{definition_1,definition_2}, 
    we can reformulate $J(u,\,\delta)$ \cref{eq:obj_finite_dim_prob_2} and its gradient, i.e., $\nabla J(u, \delta)$.
    \rev{This reformulation enables the analytical computation of both the cost function
    and its gradient, offering a significant advantage over numerical methods that depend on finite differences. 
    By leveraging shared terms, this approach not only enhances computational efficiency but also reduces the overall 
    complexity of the optimization process. 
    Consequently, it alleviates the computational burden on the solver, facilitating a faster and more accurate determination 
    of the optimal solution.}
    In the following, \cref{th_J,th_dJ}, represent the main contribution of this work.  
    \begin{theorem}	
        \label{th_J}
        Let us recall \cref{definition_1} and the augmented initial state $\overline{x}_0$, the cost function \cref{eq:obj_finite_dim_prob_2} can be rewritten as
        \begin{equation} \label{eq:J}
            J(u,\,\delta) = G_0 + \overline{x}_0^{\top} S_0 \overline{x}_0.
        \end{equation}
    \end{theorem}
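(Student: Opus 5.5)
The plan is to split the cost \cref{eq:obj_finite_dim_prob_2} into its three pieces: the input cost, the running state cost, and the terminal cost. Each piece is then identified with one of the objects in \cref{definition_1}. Everything is written in terms of the augmented state $\overline{x}(t) = [x(t)^\top,\,1]^\top$. The input term needs no work. Under \cref{assumption_2}, $u(t)=u_q$ on the $q$-th partition, so $\frac12\sum_{q=0}^N \int_0^{\delta_q} u_q^\top R u_q\,\textnormal{d}t = \frac12\sum_{q=0}^N (u_q^\top R u_q)\delta_q$, which is exactly $G_0$ by \cref{second_term}.

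For the state terms, I will first note that $Q=\textnormal{blkdiag}(\tilde Q,0)$ gives $x(t)^\top\tilde Q x(t) = \overline{x}(t)^\top Q\,\overline{x}(t)$. Likewise $E=\textnormal{blkdiag}(\tilde E,0)$ gives $x(t_\textnormal{f})^\top\tilde E x(t_\textnormal{f}) = \overline{x}(t_\textnormal{f})^\top E\,\overline{x}(t_\textnormal{f})$. Next I need $\overline{x}(t)=\Phi(t,\tau_0)\overline{x}_0$ for every $t\in[t_0,t_\textnormal{f}]$, not only at switching instants. For $t\in[\tau_k,\tau_{k+1}]$, I will regard $t$ as an intermediate instant that splits the $k$-th partition into two sub-partitions with the same mode and the same input. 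Then \cref{eq:lagrange_formula} applies with durations $\delta_0,\dots,\delta_{k-1},\,t-\tau_k$, and \cref{lemma_1} gives $\Phi(t,\tau_0)=\Phi(t,\tau_k)\Phi(\tau_k,\tau_0)$. This is how $\Phi(t,\tau_k)$ is already used inside the integral in \cref{P_matrix}, so the step is consistent with the paper's definitions.

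With this representation the running state cost becomes
\[
\tfrac12\sum_{k=0}^N\int_{\tau_k}^{\tau_{k+1}} \overline{x}_0^\top\Phi(t,\tau_0)^\top Q\,\Phi(t,\tau_0)\overline{x}_0\,\textnormal{d}t .
\]
The integrals over consecutive partitions concatenate into a single integral from $\tau_0=t_0$ to $t_\textnormal{f}$. Since $\overline{x}_0$ does not depend on $t$, it can be pulled outside the integral, which leaves $\overline{x}_0^\top P_0\,\overline{x}_0$ by \cref{P_matrix}. The terminal term is treated the same way: it equals $\frac12\overline{x}_0^\top\Phi(t_\textnormal{f},\tau_0)^\top E\,\Phi(t_\textnormal{f},\tau_0)\overline{x}_0=\overline{x}_0^\top F_0\,\overline{x}_0$ by \cref{F_matrix}. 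Adding the three pieces and using $S_0=P_0+F_0$ from \cref{S_matrix} yields \cref{eq:J}.

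The main obstacle is the justification that $\Phi(t,\tau_k)$ for a non-switching $t$ really is the transition matrix of the true trajectory. The paper defines $\Phi$ only between switching instants, so this must be argued explicitly. I plan to do it by checking that $\overline{x}(t)$ obeys the augmented linear ODE $\dot{\overline{x}}=M_k\overline{x}$ on $[\tau_k,\tau_{k+1}]$, with $M_k$ as in \cref{M_matrix}, which gives $\Phi(t,\tau_k)=e^{M_k(t-\tau_k)}$. Continuity of the state across switches is supplied by \cref{assumption_1}. The remaining identities are bookkeeping.
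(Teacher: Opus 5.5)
Your proposal is correct and follows the same route as the paper, which proves the theorem by substituting the definitions of $G_0$ and $S_0 = P_0 + F_0$ into the cost and matching the input, running, and terminal terms. Your version makes explicit what the paper leaves implicit: the identities $x^\top \tilde{Q} x = \overline{x}^\top Q \overline{x}$ and $x^\top \tilde{E} x = \overline{x}^\top E \overline{x}$, and the extension of $\Phi(t,\tau_k)$ to non-switching times via $\Phi(t,\tau_k) = e^{M_k (t-\tau_k)}$, which the paper already uses tacitly when it defines $P_k$.
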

    
    \begin{proof}
        The proof is in \ref{proof:th_1}.
    \end{proof}
    Using this theorem, we can derive the following corollary.
    \begin{corollary} \label{corollary_1}
        If we consider an autonomous switched linear system, namely, the control matrix $B_k$ is always zero, then the cost function $J(\delta)$ reduces to the form
        \begin{equation} \label{eq:J_aut}
            J(\delta) = \overline{x}_0^{\top} S_0 \overline{x}_0,
        \end{equation}
        where the matrix $S_0$ still follows \cref{definition_1}.
    \end{corollary}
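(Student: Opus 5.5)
The corollary follows from \cref{th_J} once we show that the input-dependent pieces vanish or decouple. Setting $B_k = 0$ for every $k$ gives $\Phi_{\textnormal{F}_q} = \int_0^{\delta_q} e^{A_q(\delta_q - t)} B_q \,\textnormal{d}t = 0$ in \cref{eq:phi_f_i}. Consequently $\Phi_\textnormal{F}(\tau_\ell,\tau_k)=0$ in \cref{phi_f} for every pair of switching instants. The state-transition matrix \cref{eq:transition_matrix} then becomes $\textnormal{blkdiag}(\Phi_\textnormal{A}(\tau_\ell,\tau_k),1)$, which depends only on $\delta$. The same is true of $P_0$ and $F_0$ in \cref{definition_1}, and hence of $S_0$. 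So $S_0$ keeps its definition and simply loses its dependence on $u$, which justifies writing $S_0=S_0(\delta)$.

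The remaining step concerns the scalar $G_0$. For an autonomous system the input does not enter the dynamics, so it should not be an optimization variable. I would interpret the autonomous case as in \cite{stellato2017second}, where the cost contains no input penalty: the running cost in \cref{eq:obj_finite_dim_prob_2} is only $x^\top\tilde Q x$, equivalently $u_k\equiv 0$. Under either reading, \cref{second_term} gives $G_0=\tfrac12\sum_{q=0}^N (u_q^\top R u_q)\delta_q = 0$.

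Substituting into \cref{eq:J} then yields $J(\delta)=\overline{x}_0^\top S_0\overline{x}_0$. A consistency check is that with $\Phi_\textnormal{F}=0$ and augmented state $\overline{x}_0=[x_0^\top,1]^\top$, the quadratic form collapses to $x_0^\top (\cdot) x_0$, because $Q$ and $E$ have a zero last diagonal entry. This recovers the autonomous cost of \cite{stellato2017second}.

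The only delicate point is justifying $G_0=0$. The statement itself only says $B_k=0$, which does not force $u=0$. I would therefore make the convention explicit: in the autonomous case the input, and with it the penalty $u^\top R u$, is absent from \cref{problem_1}. The rest of the argument is a direct substitution into \cref{th_J}.
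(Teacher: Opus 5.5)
Your proposal is correct and matches the paper's argument, which is only implicit: the paper gives no separate proof and reads the corollary directly off \cref{th_J}, where $\Phi_{\textnormal{F}}=0$ leaves $S_0$ a function of $\delta$ alone and the input term $G_0$ drops out. Your remark that $B_k=0$ by itself does not force $G_0=0$ is useful, since it makes explicit an assumption the paper leaves unstated: the input and its penalty must be treated as absent, as in Example~3.
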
 
    
    \begin{theorem}	\label{th_dJ}
        Let us recall \cref{definition_1,definition_2}, the gradient $\nabla J(u, \delta) = \left[\frac{\partial J(u,\,\delta)}{\partial u} \, \frac{\partial J(u,\,\delta)}{\partial \delta}\right]^\top$ can be computed as
        \begin{equation} \label{eq:dJ_u}
            \frac{\partial J(u,\,\delta)}{\partial u_{kj}} = u_{kj} R_{jj} \delta_k + \overline{x}_{k}^{\top} D_{kj} \overline{x}_{k} + \overline{x}_{k+1}^{\top} N_{kj} \overline{x}_{k+1}, \quad k = 0,\, \cdots,\, N, \quad j=1,\, \ldots,\, m,
        \end{equation}
        and
        \begin{equation} \label{eq:dJ_delta}
            \frac{\partial J(u,\,\delta)}{\partial \delta_k} = \frac{1}{2} u_k^\top R u_k + \overline{x}_{k+1}^{\top} C_k \overline{x}_{k+1}\,, \quad k = 0,\, \cdots,\, N.
        \end{equation}
    \end{theorem}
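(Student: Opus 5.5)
Starting from \cref{th_J}, we write $J(u,\delta)=G_0+\overline{x}_0^\top S_0\overline{x}_0$. We then split $S_0$ at $\tau_k$ and $\tau_{k+1}$ using \cref{lemma_1}. Concretely, $\Phi(t,\tau_0)=\Phi(t,\tau_k)\Phi(\tau_k,\tau_0)$ and $\overline{x}_k=\Phi(\tau_k,\tau_0)\overline{x}_0$, so
\begin{equation*}
\overline{x}_0^\top S_0\overline{x}_0 = \frac12\int_{\tau_0}^{\tau_k}\overline{x}(t)^\top Q\,\overline{x}(t)\,\textnormal{d}t + \frac12\int_{\tau_k}^{\tau_{k+1}}\overline{x}_k^\top\Phi(t,\tau_k)^\top Q\Phi(t,\tau_k)\overline{x}_k\,\textnormal{d}t + \overline{x}_{k+1}^\top S_{k+1}\overline{x}_{k+1},
\end{equation*}
with $\overline{x}_{k+1}=\Phi(\tau_{k+1},\tau_k)\overline{x}_k$. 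The key observation is that the first integral and $\overline{x}_k$ depend only on $(u_q,\delta_q)$ for $q<k$. Likewise, $S_{k+1}$ depends only on $(u_q,\delta_q)$ for $q>k$, since it is computed from $\tau_{k+1}$ on with relative time. Hence differentiating with respect to $u_{kj}$ or $\delta_k$ affects only the middle integral and $\overline{x}_{k+1}$. In $G_0$, only the term $\frac12 u_k^\top R u_k\delta_k$ depends on these variables.

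For $\delta_k$, we differentiate the middle integral by the Leibniz rule. The integrand in relative time $s=t-\tau_k$ does not depend on $\delta_k$; only the upper limit does. This yields $\frac12\overline{x}_{k+1}^\top Q\overline{x}_{k+1}$. For the last term, \cref{lemma_2} gives $\partial\overline{x}_{k+1}/\partial\delta_k = M_k\overline{x}_{k+1}$, so its derivative is $\overline{x}_{k+1}^\top(M_k^\top S_{k+1}+S_{k+1}M_k)\overline{x}_{k+1}$. Adding $\frac12u_k^\top Ru_k$ from $G_0$ and collecting terms gives $\overline{x}_{k+1}^\top C_k\overline{x}_{k+1}$, i.e., \cref{eq:dJ_delta}.

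For $u_{kj}$, the derivative of $\frac12u_k^\top Ru_k\delta_k$ is $(Ru_k)_j\delta_k$. This equals $u_{kj}R_{jj}\delta_k$ under the implicit assumption that $R$ is diagonal, which I would state explicitly. Next, \cref{lemma_2} applied with $\tau_a=\tau_k$ gives $\partial\overline{x}_{k+1}/\partial u_{kj}=H_{kj}\overline{x}_{k+1}$. This produces $\overline{x}_{k+1}^\top N_{kj}\overline{x}_{k+1}$. The middle integral produces $\overline{x}_k^\top D_{kj}\overline{x}_k$, provided we read $H_{kj}(t,\tau_k)$ in \cref{D_matrix} as the analogue of \cref{H_matrix} for the partial interval $[\tau_k,t]$. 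In that analogue, $\Phi_{F_k}$ is computed with upper limit $t-\tau_k$ instead of $\delta_k$. By \cref{lemma_2} on a sub-interval this satisfies $\partial\Phi(t,\tau_k)/\partial u_{kj}=H_{kj}(t,\tau_k)\Phi(t,\tau_k)$, and differentiating the symmetric quadratic integrand then gives exactly $D_{kj}$.

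The main obstacle is justifying the claim that $S_{k+1}$ is independent of $\delta_k$ and $u_k$. Stated in absolute time, $S_{k+1}$ depends on $\tau_{k+1}$, which moves with $\delta_k$. I would handle this by a change of variables $s=t-\tau_{k+1}$ in \cref{P_matrix}. After this substitution $\Phi(\tau_{k+1}+s,\tau_{k+1})$ is built only from $A_q,B_q,u_q,\delta_q$ with $q\ge k+1$. The upper limit becomes $t_{\textnormal{f}}-\tau_{k+1}=\sum_{q>k}\delta_q$, which is independent of $\delta_k$ when the $\delta_q$ are treated as independent variables. The equality constraint \cref{eq:c5_finite_dim_prob_2} is then handled by the solver and not substituted, so that the partial derivatives are taken in the unconstrained $\delta$-space.
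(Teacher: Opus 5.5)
Your proof is correct and is essentially the paper's argument. The paper proves \cref{lemma_3} (the derivatives of $S_a$ for general $\tau_a \le \tau_k$) using the same split at $\tau_k$ and $\tau_{k+1}$, the Leibniz rule on the middle piece, and \cref{lemma_2} on the factors sandwiching $P_{k+1}$ and $F_{k+1}$; it then sets $a=0$ and contracts with $\overline{x}_0$, whereas you carry out the same steps directly on $\overline{x}_0^\top S_0 \overline{x}_0$, making explicit the diagonal $R$ and the independent $\delta_q$ (so $S_{k+1}$ does not depend on $\delta_k$) that the paper uses silently, and keeping the factors $\frac12 Q$ in $C_k$ and $\frac12 u_k^\top R u_k$ that the appendix's intermediate steps drop.
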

                
    \begin{proof}
        The proof is in \ref{proof:th_2}.
    \end{proof}
    From \cref{th_dJ}, we can obtain the following corollary.
    \begin{corollary} \label{corollary_2}
        Considering an autonomous switched linear system, namely, the control matrix $B_k$ is always zero, then the gradient of the cost function can be computed only w.r.t. the partitions duration $\delta_k$ as
        \begin{equation} \label{eq:dJ_delta_aut}
            \frac{\partial J(\delta)}{\partial \delta_k} = \overline{x}_{k+1}^{\top} C_k \overline{x}_{k+1}\,, \quad k = 0,\, \cdots,\, N.
        \end{equation}
    \end{corollary}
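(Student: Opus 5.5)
Under the hypothesis $B_k \equiv 0$, I would obtain \cref{corollary_2} as a direct specialization of \cref{th_dJ}, formula \cref{eq:dJ_delta}. The argument should also explain why no derivative with respect to $u$ appears.

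First, I would check that the input drops out of the problem. With $B_k=0$, \cref{eq:phi_f_i} gives $\Phi_{\textnormal{F}_k}=0$ for every $k$, so $\Phi_\textnormal{F}(\tau_\ell,\tau_k)=0$ by \cref{phi_f}. By \cref{eq:lagrange_formula}, the state evolution then does not depend on $u$, and the matrices $S_k$ of \cref{definition_1} are functions of $\delta$ only. The input enters the cost of \cref{problem_1} only through $G_0$. In the autonomous setting this term is absent, since there is no control; equivalently one takes $u\equiv 0$, so $G_0=0$. This matches \cref{corollary_1}, where $J(\delta)=\overline{x}_0^\top S_0\overline{x}_0$ depends only on $\delta$. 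Hence $\nabla J$ has components only with respect to the durations $\delta_k$. As a consistency check, $H_{kj}=0$ because $\Phi_{F_k}^j=0$, so $D_{kj}$ and $N_{kj}$ vanish and \cref{eq:dJ_u} reduces to the term $u_{kj}R_{jj}\delta_k$, which disappears with $u\equiv0$.

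Next, I would specialize \cref{eq:dJ_delta}. With $u_k=0$ the term $\tfrac12 u_k^\top R u_k$ vanishes. The upper-right block $B_k u_k(\tau_{k+1})$ of $M_k$ in \cref{M_matrix} is also zero, so $M_k=\textnormal{blkdiag}(A_k,0)$. The matrix $C_k$ of \cref{C_matrix} keeps its form, built from this $M_k$ and $S_{k+1}$. This leaves exactly $\partial J/\partial\delta_k=\overline{x}_{k+1}^\top C_k\overline{x}_{k+1}$.

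The only delicate point is the legitimacy of setting $u\equiv0$, that is, identifying the autonomous cost with the $u=0$ slice of \cref{eq:obj_finite_dim_prob_2}. This holds because $J$ in \cref{th_dJ} is differentiable jointly in $(u,\delta)$. Therefore the partial derivative in $\delta_k$ evaluated at $u=0$ equals the derivative of the restricted function $J(0,\delta)=J(\delta)$. Alternatively, one could rerun the proof of \cref{th_dJ} with \cref{lemma_2} specialized to $B_k=0$, which gives the same expression.
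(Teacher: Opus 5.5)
Your proposal is correct and follows the paper's route: the paper obtains the corollary directly from \cref{th_dJ}, specializing the $\delta_k$-formula to $B_k=0$ and $u\equiv 0$, so that the $\tfrac12 u_k^\top R u_k$ term vanishes and $M_k$ (hence $C_k$) loses its input block. One small remark is that restricting to $u=0$ does not need joint differentiability, because the partial derivative in $\delta_k$ at fixed $u=0$ is by definition the derivative of $\delta\mapsto J(0,\delta)$.
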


    \rev{\Cref{th_J} introduces an analytical expression for the cost function, leveraging
    the state-transition matrix and the matrices defined in \cref{definition_1,definition_2}.
    This formulation significantly enhances computational efficiency by utilizing
    shared terms that can be precomputed, reducing the overall computational burden.
    Similarly, \Cref{th_dJ} provides analytical expressions for the gradient of the cost
    function, streamlining the optimization process and enabling precise adjustments
    to the control inputs and partitions duration.}
    \rev{Notably, the proposed methodology is highly versatile, as it accommodates both
    autonomous and non-autonomous switched linear systems, thereby significantly broadening
    its range of applicability. In contrast to the approach presented in
    \cite{stellato2016optimal,stellato2017second}, which computes the Hessian of the Bolza-form cost function
    and is specifically designed for autonomous systems, our method evaluates only
    the cost function and its gradient, substantially reducing the computational burden.
    Moreover, by directly addressing non-autonomous switched systems without relying
    on local linearizations, our approach extends the applicability of constrained
    LQR solutions while maintaining computational efficiency within switched linear
    frameworks.}

\section{Numerical Solution}
\label{sec:numerical_solution}
\rev{In this section, we present the numerical solution of \cref{eq:finite_dim_prob_2} leveraging the results of \cref{sec:cost_function_and_gradient}.
We first describe the offline computations, which include the computation of the cost function and its gradient. 
Then, we perform the optimization of the cost function exploiting the gradient information.

The core idea of the numerical implementation is to compute the autonomous and
forced evolutions of the switched system within each partition. 
In this way, each term becomes a function of the optimization variables, $u$ and $\delta$. 
By then applying the results from \cref{sec:cost_function_and_gradient}, explicit expressions for the cost function
and its gradient are obtained solely in terms of $u$ and $\delta$. 
This formulation enables the subsequent execution of the optimization procedure to determine the optimal values, namely, $u^*$ and $\delta^*$.}


    \subsection{State Propagation and Matrix Exponential} \label{state_prop_exp_mat}
    \begin{algorithm}[t]
    \caption{Compute exponential matrices}
    \label{alg:alg1}
    \begin{algorithmic}[1] 
        \STATE \textbf{function} \textsc{MatExpProp} 
        \STATE \hspace{0.5cm} $L_{k} \gets \textnormal{Eq. \eqref{eq:exp_mat}}$ 
        \STATE \hspace{0.5cm} $\A(\delta_{k}), \F(\delta_{k}, u_{k}) \gets \textnormal{Eq. \eqref{eq:exp_mat_results}}$ 
        \STATE \hspace{0.5cm} $x_{k+1}\gets \A(\delta_{k}) x_{k} + \F(\delta_{k}, u_{k})$ 
        \STATE \hspace{0.5cm} \textbf{for}$j = 1,\, \ldots,\, m \textbf{ do}$ 
        \STATE \hspace{1.0cm} $H_{kj}\gets \mathrm{col}_{j}(L_{k}^{(2)})$ 
        \STATE \hspace{0.5cm} \textbf{end for} 
        \STATE \hspace{0.5cm} \textbf{return} $x_{k}$ 
        \STATE \hspace{0.5cm} \textbf{return} $\A(\delta_{k}),\, \F(\delta_{k}, u_{k})$
        \STATE \hspace{0.5cm} \textbf{return} $H_{kj},\, \quad j=1,\, \ldots,\, m$ 
        \STATE \textbf{end function}
    \end{algorithmic}
    \label{alg1}
\end{algorithm}
    \rev{In the numerical implementation, the solution of the ODE repeating each mode can be computed either through
    standard numerical integrators, such as Forward Euler or Runge-Kutta, or via the matrix exponential. 
    The latter approach, computed as described in \cite{van1978computing},
    is significantly more efficient, as it yields the solution in a single step rather
    than through iterative methods.
    \begin{definition} \label{definition_3}
        As outlined in \cref{phi_a,phi_f}, for each partition, we compute both the autonomous evolution, given by 
        \begin{equation} \label{phi_aut_i}
            \A(\delta_k) = e^{A_k \delta_k} \quad k = 0,\, \ldots,\, N
        \end{equation}
        as well as the forced evolution, expressed as 
        \begin{equation} \label{phi_forced_i}
            \F(\delta_k, u_k) = \int_{0}^{\delta_{k}}e^{A_k(\delta_{k}-t)}B_k \, \textnormal{d}t\, u_k \quad k = 0,\, \ldots,\, N.
        \end{equation}
    \end{definition}
    As shown in \cite{van1978computing}, we can compute the matrix exponential of a block matrix in a single step.
    The matrix exponential of the block matrix $Z_k$ in \cref{eq:exp_mat} is given by
    \begin{equation} \label{eq:mat}
        Z_k = \begin{bmatrix}A_k&B_k \\ 0&0\end{bmatrix},
    \end{equation}
    where $A_k$ and $B_k$ are the matrices of the system in \cref{eq:finite_dim_prob_2} for the $k$-th partition.
    The matrix exponential of $Z_k$ is computed as follows:
    \begin{equation} \label{eq:exp_mat}
        L_k = e^{Z_k \delta_k} \triangleq \begin{bmatrix} L_k^{(1)}&L_k^{(2)} \\ 0&L_k^{(3)}\end{bmatrix},
    \end{equation}
    where $L_k^{(1)}$ and $L_k^{(2)}$ represent the blocks of the matrix exponential corresponding to the autonomous and forced evolutions, respectively. 
    Specifically, these blocks can be expressed as:
    \begin{equation} \label{eq:exp_mat_results}
        \A(\delta_k) = L_k^{(1)}, \quad \F(\delta_k, u_k) = L_k^{(2)} u_k.
    \end{equation}
    Here, $\A(\delta_k)$ captures the autonomous evolution of the system, while $\F(\delta_k, u_k)$ incorporates the control input $u_k$ to account for the forced evolution. 
    This formulation ensures computational efficiency by leveraging the structure of the matrix exponential.
    To streamline the computation of the matrix $H_{kj}$, we define the function $\mathrm{col}_j(A)$, which extracts the $j$-th column of a matrix $A \in \mathbb{R}^{n \times m}$ as follows:
    $
    \mathrm{col}_j(A) \triangleq (a_{1j}, a_{2j}, \ldots, a_{nj})^\top, \quad \text{where } A = (a_{ij}).
    $
    By applying this function to the matrix $L_k^{(2)}$, the required columns for constructing $H_{kj}$, as defined in \cref{H_matrix}, can be efficiently extracted. This approach minimizes redundant computations and enhances numerical efficiency.
    This ensures that the computation of $H_{kj}$ is streamlined and avoids
    redundant operations.
    \Cref{alg:alg1} provides a detailed procedure for computing the state-transition
    matrices and propagating the state efficiently. 
    This algorithm leverages the matrix exponential to compute both the autonomous and forced evolutions, 
    ensuring computational accuracy and efficiency.
    }
    
    \subsection{Compute Cost Function and Gradient} \label{compute_J_grad}
    To compute the cost function and its gradient, \cref{th_J,th_dJ} are utilized leveraging the shared computations in \cref{definition_1,definition_2}. 
    \cref{alg:alg2} integrates these operations. 
    Although $S_k$ is defined in \cref{S_matrix}, it can be efficiently computed using a recursive approach as outlined in \cref{prep}.

    \begin{proposition} \label{prep}
        The matrix $S_k$ in \eqref{S_matrix} with $k = 0,\, \ldots,\, N$ can be computed satisfying the following recursive equation:
        \begin{align}
            &S_N = \frac{1}{2} E \label{SN}\\
            &S_k = S_k^\Sigma + \Phi(\tau_{k+1},\, \tau_k)^\top S_{k+1} \Phi(\tau_{k+1},\, \tau_k) \label{Si}\\
            &\textnormal{with } S_k^\Sigma = \frac{1}{2} \int_{0}^{\delta_{k}} \Phi(\eta + \tau_k,\,\tau_k)^\top Q \Phi(\eta + \tau_k,\,\tau_k) \, \textnormal{d}\eta.
        \end{align}
    \end{proposition}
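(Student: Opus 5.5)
The plan is to split the integral in the definition of $P_k$ at $\tau_{k+1}$, use the composition rule of \cref{lemma_1} to factor the state-transition matrix over the tail $[\tau_{k+1}, t_{\textnormal{f}}]$, and treat $F_k$ the same way. This gives a backward recursion in which $S_{k+1}$ appears sandwiched between $\Phi(\tau_{k+1},\tau_k)^\top$ and $\Phi(\tau_{k+1},\tau_k)$.

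\textbf{Base case.} The last switching instant plays the role of the terminal time, and I read the indexing so that the terminal matrix is attached to $t_{\textnormal{f}}$. Then the integral in \cref{P_matrix} is over an empty interval and vanishes, and $\Phi(t_{\textnormal{f}},t_{\textnormal{f}})=I$ by \cref{eq:transition_matrix}, because the products are empty and the forced sum is zero. Hence $F=\frac12 E$ and $S=\frac12 E$, which is \cref{SN}.

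I expect the only obstacle here to be notational. The paper indexes partitions $k=0,\dots,N$ with $\tau_{N+1}=t_{\textnormal{f}}$, so the base case literally concerns $S_{N+1}$. I would state explicitly that \cref{SN} refers to the matrix evaluated at $t_{\textnormal{f}}$, equivalently $S_{N+1}=\frac12E$, and that the recursion then runs downward from there. If instead $S_N$ is meant with $\tau_N<t_{\textnormal{f}}$, the same recursion step below applies once more, with the $S^\Sigma_N$ term included.

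\textbf{Inductive step for $P_k$.}
\begin{enumerate}
\item Split the integral as $P_k=\frac12\int_{\tau_k}^{\tau_{k+1}}\Phi(t,\tau_k)^\top Q\Phi(t,\tau_k)\,dt+\frac12\int_{\tau_{k+1}}^{t_{\textnormal{f}}}\Phi(t,\tau_k)^\top Q\Phi(t,\tau_k)\,dt$.
\item In the first integral substitute $t=\eta+\tau_k$ with $\eta\in[0,\delta_k]$. This gives exactly $S_k^\Sigma$.
\item In the second integral, apply \cref{lemma_1} with intermediate instant $\tau_{k+1}$, so that $\Phi(t,\tau_k)=\Phi(t,\tau_{k+1})\Phi(\tau_{k+1},\tau_k)$ for $t\ge\tau_{k+1}$. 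Here $\Phi(t,\cdot)$ at a generic time $t$ inside a partition is understood as the transition over a partially elapsed partition. The lemma extends to this case because the same product/convolution structure holds with the final $\delta$ truncated.
\item Pull the constant factor $\Phi(\tau_{k+1},\tau_k)$ out of the integral. The second piece becomes $\Phi(\tau_{k+1},\tau_k)^\top P_{k+1}\Phi(\tau_{k+1},\tau_k)$.
\end{enumerate}

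\textbf{Inductive step for $F_k$.} Applying \cref{lemma_1} again, $\Phi(t_{\textnormal{f}},\tau_k)=\Phi(t_{\textnormal{f}},\tau_{k+1})\Phi(\tau_{k+1},\tau_k)$. This gives $F_k=\Phi(\tau_{k+1},\tau_k)^\top F_{k+1}\Phi(\tau_{k+1},\tau_k)$.

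\textbf{Conclusion.} Summing the two steps and using \cref{S_matrix} yields \cref{Si}. Backward induction from the terminal index completes the proof.
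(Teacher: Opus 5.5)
Your proposal is correct and takes essentially the same route as the paper: split the integral at $\tau_{k+1}$, change variables $\eta=t-\tau_k$ on $[\tau_k,\tau_{k+1}]$ to obtain $S_k^\Sigma$, and use the composition rule to pull $\Phi(\tau_{k+1},\tau_k)$ out of both the tail integral and the terminal term, which leaves $S_{k+1}$ in the middle. Your reading of the base case as $S_{N+1}=\frac12 E$ (since $\tau_{N+1}=t_{\textnormal{f}}$) agrees with the paper's own ``setting $i=N+1$'', and your remark that the composition rule must be extended to non-switching times $t$ makes explicit a point the paper leaves implicit.
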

    \begin{proof}
    The proof is in \ref{proof_prep}.
    \end{proof}
    As shown in \cref{th_J,prep}, the evolution of the system across each partition of the time interval $\left[t_0, \, t_\textnormal{f} \right]$ is inherently embedded in the cost function formulation. 
    By leveraging matrix exponentials, the need for numerical integration is eliminated, significantly enhancing the computational efficiency of the method. 
    This approach ensures that the cost function and its gradient can be computed with minimal overhead, enabling faster optimization.
    Finally, the optimization problem is solved to find the optimal values of $u$ and $\delta$ that minimize the cost function.
    \begin{algorithm}[h]
        \caption{Compute $J(u,\,\delta)$ and $\nabla J(u,\,\delta)$}\label{alg:alg2}
        \begin{algorithmic}[1]
            \STATE \textbf{function} \textsc{CostFunctionAndGradient}
            \STATE \hspace{0.5cm} Offline computations:
            \STATE \hspace{0.5cm} \textbf{for} $k = 0,\, \ldots,\, N \textbf{ do}$
            \STATE \hspace{1.0cm} $x_k,\, \A(\delta_k), \F(\delta_k, u_k),\, H_{kj} \gets \textsc{MatExpProp}$
            \STATE \hspace{1.0cm} $G_{k} \gets \textnormal{Eq. \eqref{G}}$
            \STATE \hspace{1.0cm} $S_{k} \gets \textsc{ComputeS}$
            \STATE \hspace{1.0cm} $C_{k} \gets \textnormal{Eq. \eqref{C_matrix}}$
            \STATE \hspace{1.0cm} \textbf{for} $j = 0,\, \ldots,\, m \textbf{ do}$
            \STATE \hspace{1.5cm} $D_{kj} \gets \textnormal{Eq. \eqref{D_matrix}}$
            \STATE \hspace{1.5cm} $N_{kj} \gets \textnormal{Eq. \eqref{N_matrix}}$
            \STATE \hspace{1.0cm} \textbf{end for}
            \STATE \hspace{0.5cm} \textbf{end for}
            \STATE \hspace{0.5cm} Compute $J(u,\,\delta)$ and $\nabla J(u,\,\delta)$
            \STATE \hspace{1.0cm} $J(u,\, \delta) \gets \textnormal{Eq. \cref{eq:J}}$
            \STATE \hspace{1.0cm} $\frac{\partial J(u,\,\delta)}{\partial u} \gets \textnormal{Eq. \cref{eq:dJ_u}}$
            \STATE \hspace{1.0cm} $\frac{\partial J(u,\,\delta)}{\partial \delta} \gets \textnormal{Eq. \cref{eq:dJ_delta}}$
            \STATE \hspace{0.5cm} \textbf{return} $J(u,\,\delta)$
            \STATE \hspace{0.5cm} \textbf{return} $\nabla J(u,\,\delta)$
            \STATE \textbf{end function}
        \end{algorithmic}
        \label{alg2}
    \end{algorithm}

\rev{\section{Limitations}
\label{sec:limitations}
In the proposed method, the assumption of a fixed switching signal is relaxed by allowing repetitions of a
canonical sequence, defined as $(1, 2, \ldots, N_{\textnormal{I}})$, with the flexibility to assign zero duration to individual modes. 
This approach broadens the set of admissible switching sequences, but the required number of repetitions to sufficiently cover
a desired set of sequences remains a critical consideration. 
For instance, if the optimal sequence is $(N_{\textnormal{I}}, N_{\textnormal{I}}-1, \ldots, 2, 1)$, achieving this requires $n_{\textnormal{c}}$ to
equal $N_{\textnormal{I}}$, resulting in many modes having zero duration. 
This highlights the sensitivity of the method's performance not only to the choice of the canonical sequence
but also to the number of repetitions $n_{\textnormal{c}}$.
While the framework supports any canonical sequence, including randomized ones that may be advantageous in certain scenarios,
practical applications benefit significantly from leveraging prior knowledge of the physical system to guide this selection. 
Such knowledge ensures that the switching sequences remain feasible and consistent with system dynamics and constraints, 
while excluding irrelevant canonical sequences can improve efficiency and reduce computational overhead.
On the other hand, if there is no knowledge about the system, the method suffers from the curse of dimensionality,
as the majority of the methods in the literature.

Another important consideration is the optimization landscape. 
Since our approach relies on gradient-based methods, it is inherently susceptible to convergence to local minima. 
This issue becomes particularly pronounced when the number of modes is large, 
necessitating an increasing number of repetitions $n_{\textnormal{c}}$ to allow for diverse switching sequences. 
The primary source of non-convexity in the optimization arises from the dependence of the cost function on the mode's duration
$\delta_{q}$, which appears in the exponent of the matrix exponential (see \cref{phi_a,phi_f}). 
In contrast, the cost function remains linear with respect to the continuous control inputs $u_{q}$, partially mitigating this challenge. 
Nonetheless, the presence of multiple local minima can significantly impact solution quality.
Potential strategies to address this issue include employing multiple initializations of the optimization algorithm
or utilizing a global optimization strategy, such as genetic algorithms or particle swarm optimization,
to explore the solution space more thoroughly \cite{9965621}.
Addressing these limitations is an important direction for future research.

}

\section{Examples}
\label{sec:examples}
To demonstrate the effectiveness of our approach, we implemented \cref{alg1,alg2} in Python using the Casadi library \cite{Andersson2019}. 
The cost function \cref{eq:J} and its first derivatives \cref{eq:dJ_u,eq:dJ_delta} were provided to the IPOPT solver \cite{wachter2006implementation}. 
All simulations were performed on an MSI Prestige 15 equipped with an Intel Core i7 processor and 16 GB of RAM. 

In this section, we present two illustrative examples. 
The first example involves a switched linear system with multiple modes and inputs, 
showcasing the versatility of our approach. 
For both examples, the initial values of $\delta_k$ were initialized as equally spaced within the time interval $\left[t_0, t_\textnormal{f} \right]$. 
This choice provides a uniform starting point for the optimization process, ensuring a fair distribution of switching instants across the time horizon.
    \rev{\subsection{Example 1}
    In this example, we analyze a switched linear system characterized by $N_{\textnormal{I}} = 10$ modes and a control parameter $n_\textnormal{c} = 5$.
    As a result, the system operates over $N = 49$ switching instants, leading to a total of $50$ time partitions.
    The dynamics of the switched system are described, according to \cref{eq:switched_sys}, as follows:
    \[
    \dot{x}(t) = {}^i A x(t) + {}^i B u(t), \quad i = 1, \ldots, 10
    \]
    with matrices ${}^i A$ and ${}^i B$ given by:

    \[
    \begin{array}{cccc}
        {}^1 A = \begin{bmatrix}
    -2.5 & 0.5 & 0.3 \\
    0.4 & -2.0 & 0.6 \\
    0.2 & 0.3 & -1.8
    \end{bmatrix} &
    {}^1 B = \begin{bmatrix}
    1.5 & 0.3 \\
    0.4 & 1.2 \\
    0.2 & 0.8
    \end{bmatrix} &
    {}^2 A = \begin{bmatrix}
    -1.9 & 3.2 & 0.4 \\
    0.3 & -2.1 & 0.5 \\
    0 & 0.6 & -2.3
    \end{bmatrix} &
    {}^2 B = \begin{bmatrix}
    1.2 & 0.5 \\
    0.3 & 0.9 \\
    0.4 & 1.1
    \end{bmatrix} \\[1em]

    {}^3 A = \begin{bmatrix}
    -2.2 & 0 & 0.5 \\
    0.2 & -1.7 & 0.4 \\
    0.3 & 0.2 & -2.0
    \end{bmatrix} &
    {}^3 B = \begin{bmatrix}
    1.0 & 0.4 \\
    0.5 & 1.3 \\
    0.3 & 0.7
    \end{bmatrix} &
    {}^4 A = \begin{bmatrix}
    -1.8 & 0.3 & 0.2 \\
    0.5 & -2.4 & 0 \\
    0.4 & 0 & -2.2
    \end{bmatrix} &
    {}^4 B = \begin{bmatrix}
    1.4 & 0.2 \\
    0.6 & 1.0 \\
    0.1 & 0.9
    \end{bmatrix} \\[1em]

    {}^5 A = \begin{bmatrix}
    -2.0 & 0.4 & 0 \\
    0.3 & -2.2 & 0.2 \\
    0.5 & 0.3 & -1.9
    \end{bmatrix} &
    {}^5 B = \begin{bmatrix}
    1.3 & 0.1 \\
    0.2 & 1.4 \\
    0.5 & 0.6
    \end{bmatrix} &
    {}^6 A = \begin{bmatrix}
    -2.3 & 0.2 & 0.3 \\
    0 & -2.0 & 0.4 \\
    0.2 & 0.5 & -2.1
    \end{bmatrix} &
    {}^6 B = \begin{bmatrix}
    1.1 & 0.3 \\
    0.4 & 1.5 \\
    0.2 & 0.8
    \end{bmatrix} \\[1em]

    {}^7 A = \begin{bmatrix}
    -1.7 & 0.5 & 0.4 \\
    0.2 & -2.5 & 0.3 \\
    1.1 & 0.2 & -2.4
    \end{bmatrix} &
    {}^7 B = \begin{bmatrix}
    1.6 & 0.2 \\
    0.3 & 1.1 \\
    0.4 & 0.7
    \end{bmatrix} &
    {}^8 A = \begin{bmatrix}
    -2.1 & 0.3 & 0.2 \\
    0.4 & -1.9 & 0.5 \\
    0.3 & 0.1 & -2.0
    \end{bmatrix} &
    {}^8 B = \begin{bmatrix}
    1.0 & 0.4 \\
    0.5 & 1.2 \\
    0.3 & 0.9
    \end{bmatrix} \\[1em]

    {}^9 A = \begin{bmatrix}
    -2.4 & 0 & 0.5 \\
    0.2 & -2.3 & 0.3 \\
    0.4 & 0.2 & -1.8
    \end{bmatrix} &
    {}^9 B = \begin{bmatrix}
    1.2 & 0.5 \\
    0.1 & 1.3 \\
    0.6 & 0.8
    \end{bmatrix} &
    {}^{10} A = \begin{bmatrix}
    -1.8 & 0.4 & 0.3 \\
    0.5 & -2.1 & 0.2 \\
    0.2 & 3.1 & -2.2
    \end{bmatrix} &
    {}^{10} B = \begin{bmatrix}
    1.4 & 0.3 \\
    0.2 & 1.0 \\
    0.5 & 0.7
    \end{bmatrix}
    \end{array}
    \]

    Our objective is to steer the system from an initial state of $x_0 = [2,\,-1,\,5]^\top$ to the origin 
    within a time horizon of 2 seconds, i.e., $t_0 = 0$s and $t_\textnormal{f} = 2$s.
    To achieve this, we compute \cref{eq:J}, \cref{eq:dJ_u}, and \cref{eq:dJ_delta} using $\tilde{Q} = 10 I_{n\times n}$ 
    (identity matrix), $R = 10 I_{m\times m}$, and $\tilde{E} = I_{n\times n}$ as the terminal cost. 
    The control input is constrained within the bounds $u_{\textnormal{min}} = -1$ and $u_{\textnormal{max}} = 1$.

    \begin{figure}
        \centering
        \begin{subfigure}[t]{0.45\columnwidth}
            \includegraphics[width=\textwidth, keepaspectratio]{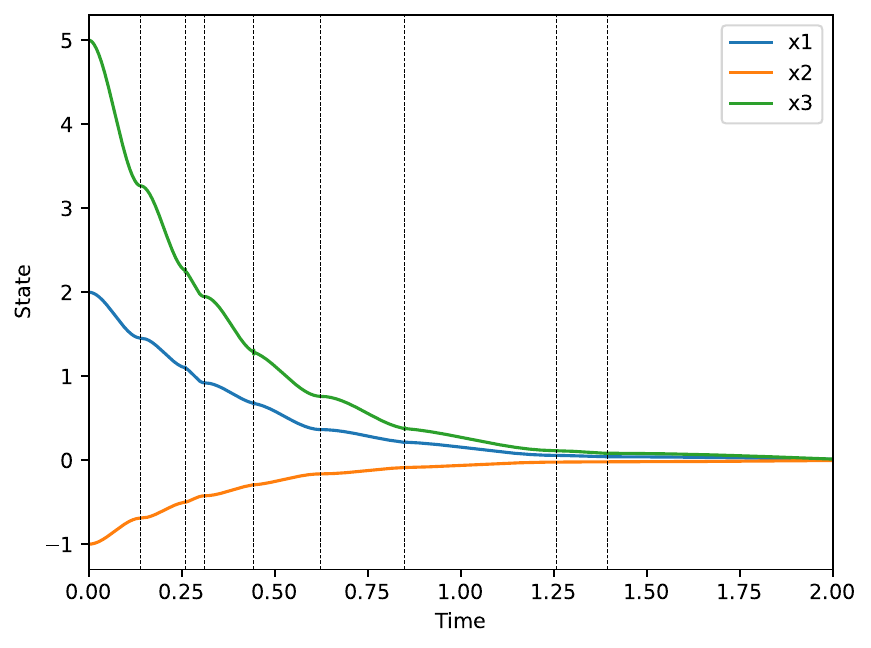}
            \caption{Optimal state evolution.}
            \label{fig:example_1_paper_NAHS_optimal_state}
        \end{subfigure}
        \hfill
        \begin{subfigure}[t]{0.45\columnwidth}
            \includegraphics[width=\textwidth, keepaspectratio]{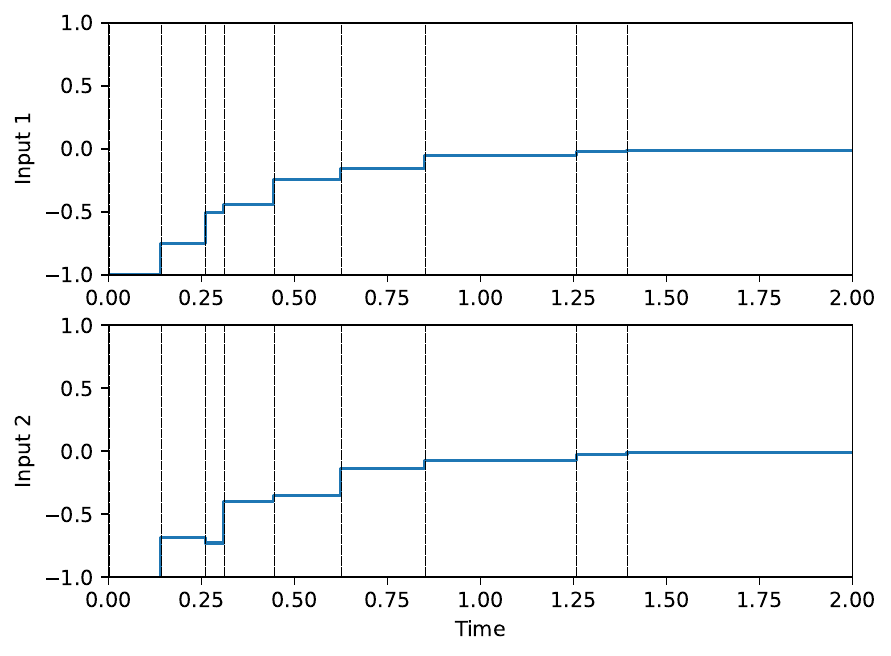}
            \caption{Optimal control input evolution at the optimal switching times.}
            \label{fig:example_1_paper_NAHS_optimal_input}
        \end{subfigure}
        \caption{Results of the optimization: (a) State behavior and (b) Control input evolution.}
        \label{fig:example_1_combined}
    \end{figure}

    The total solving time for the optimization problem is 3.54 seconds, with 3.125 seconds 
    dedicated to offline computations and 0.416 seconds utilized by the optimization solver. 
    This highlights the efficiency of the proposed approach in balancing pre-computation and 
    real-time optimization.
    The value of the optimal cost function is $J^* = 28.71$, and the solver converged to the optimal solution
    with a final tolerance of $10^{-8}$.
    The optimization process produced the results shown in \cref{fig:example_1_combined}. 
    The vertical dashed lines in \cref{fig:example_1_combined} represent the switching instants, 
    determined by the optimal values of $\delta_i$. 
    \Cref{fig:example_1_paper_NAHS_optimal_state} illustrates the optimal state evolution, obtained by applying 
    the optimal control input $u^*$ and the optimal mode durations $\delta^*$ to the system dynamics. 
    The optimal control input $u^*$, which is piecewise constant as expected from the problem formulation, 
    is depicted in \cref{fig:example_1_paper_NAHS_optimal_input}.
    It is important to note that, as per \cref{assumption_1}, there are no abrupt changes in the state when transitioning 
    from one active mode to another. From \cref{fig:example_1_combined}, it is evident that the system's state successfully 
    converges to the origin. Additionally, the time partitions are non-uniform at the end of the optimization process, 
    demonstrating the algorithm's ability to adjust the switching instants to minimize the cost function. 
    This flexibility results in some modes having longer duration, while others have duration close to zero, 
    as depicted in \cref{fig:example_1_paper_NAHS_optimal_modes}.
    In \cref{fig:example_1_paper_NAHS_optimal_modes}, we highlight the active modes with duration exceeding 0.05 seconds. 
    This demonstrates the optimization algorithm's ability to refine the switching sequence, ensuring an efficient state transition 
    while effectively minimizing the cost function.
    
    \begin{figure}
        \centering
        \includegraphics[width=0.5\columnwidth, keepaspectratio]{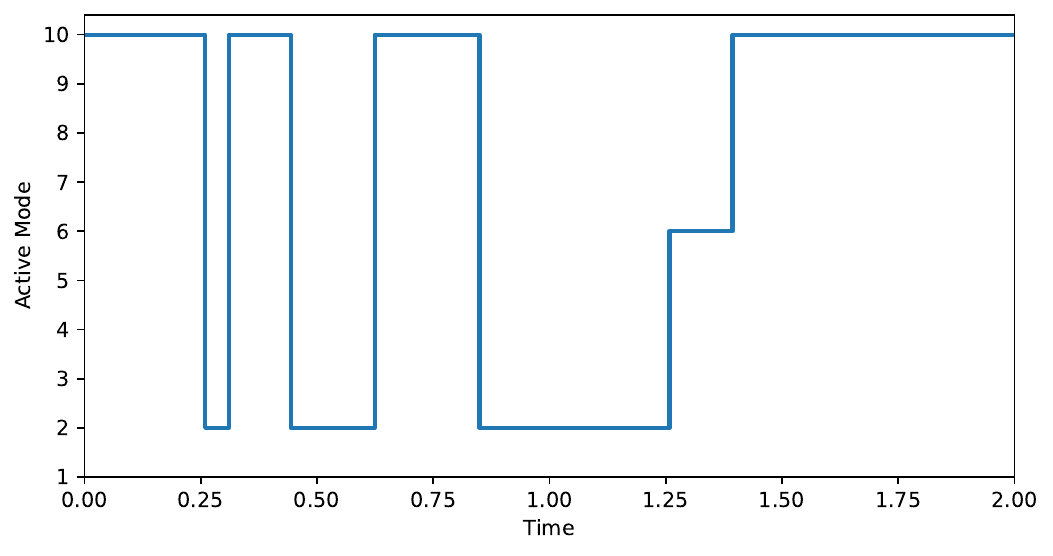}
        \caption{Optimized sequence of the active modes during the system's evolution.}
        \label{fig:example_1_paper_NAHS_optimal_modes}
    \end{figure}
    }
    \rev{\subsection{Example 2}
    In this example, we want to demonstrate the potential of our algorithm on a practical application.
    We consider a spring-mass system, which is a well-known example in control theory.
    We assume that the system can switch between two different configurations, a stance phase and a flight phase.
    The dynamics of the system are described by the following equations:
    \begin{equation*} \label{eq:example_2}
        \dot{x}(t) = {}^i A x(t) + {}^i B u(t) \quad i = \{1,\, 2\}.
    \end{equation*}
    The matrices are defined as follows:
    \[
    \begin{aligned} 
        {}^1 A &= \begin{bmatrix} 
            0 & 1 \\ 
            -\frac{k}{m} & -\frac{b}{m} \\
        \end{bmatrix}, & 
        {}^1 B &= \begin{bmatrix} 
            0 & 0 & 0 \\ 
            \frac{1}{m} & \frac{k}{m} & -1 \\
        \end{bmatrix}, & 
        {}^2 A &= \begin{bmatrix} 
            0 & 1 \\ 
            0 & 0 \\ 
        \end{bmatrix}, & 
        {}^2 B &= \begin{bmatrix} 
            0 & 0 & 0 \\ 
            0 & 0 & -1 \\ 
        \end{bmatrix},
    \end{aligned}
    \]
    with $m$ being the mass, $k$ being the spring constant, and $b$ being the damping coefficient.
    The state vector $x(t)$ is defined as $x(t) = [y(t), \dot{y}(t)]^\top$, 
    where $y(t)$ is the position of the mass, $\dot{y}(t)$ is its velocity.
    The control input $u(t)$ is defined as $u(t) = [F, l_0, g]^\top$, where $F$ is the force applied to the mass, $l_0$ is the rest length of the spring, and $g$ is the gravity acceleration.
    To fully define the system, we set the following parameters:
    \begin{equation*} \label{eq:parameters}
        m = 1.5\,\textnormal{kg}, \quad k = 100\,\textnormal{N/m},  \quad b = 25\,\textnormal{Ns/m}\quad l_0 = 0.5\,\textnormal{m}, \quad g = 9.81\,\textnormal{m/s}^2.
    \end{equation*}
    The objective is to regulate \eqref{eq:example_2}, transitioning the mass position from $y(0) = 0.4\,\textnormal{m}$ to $y(t_{\textnormal{f}}) = 0.7\,\textnormal{m}$ 
    within a time horizon of 1 second, i.e., $t_0 = 0\,\textnormal{s}$ and $t_\textnormal{f} = 1\,\textnormal{s}$. 
    The control input is constrained within the bounds $u_{\textnormal{min}} = \left[-100, l_0, g\right]$ and $u_{\textnormal{max}} = \left[100, l_0, g\right]$, 
    meaning that the control input can be used to apply a force of up to 100 N in either direction, while also allowing for the spring's rest length and gravity to be considered.
    \begin{figure}[ht]
        \centering
        \begin{subfigure}[t]{0.48\textwidth}
            \centering
            \includegraphics[width=\textwidth, keepaspectratio]{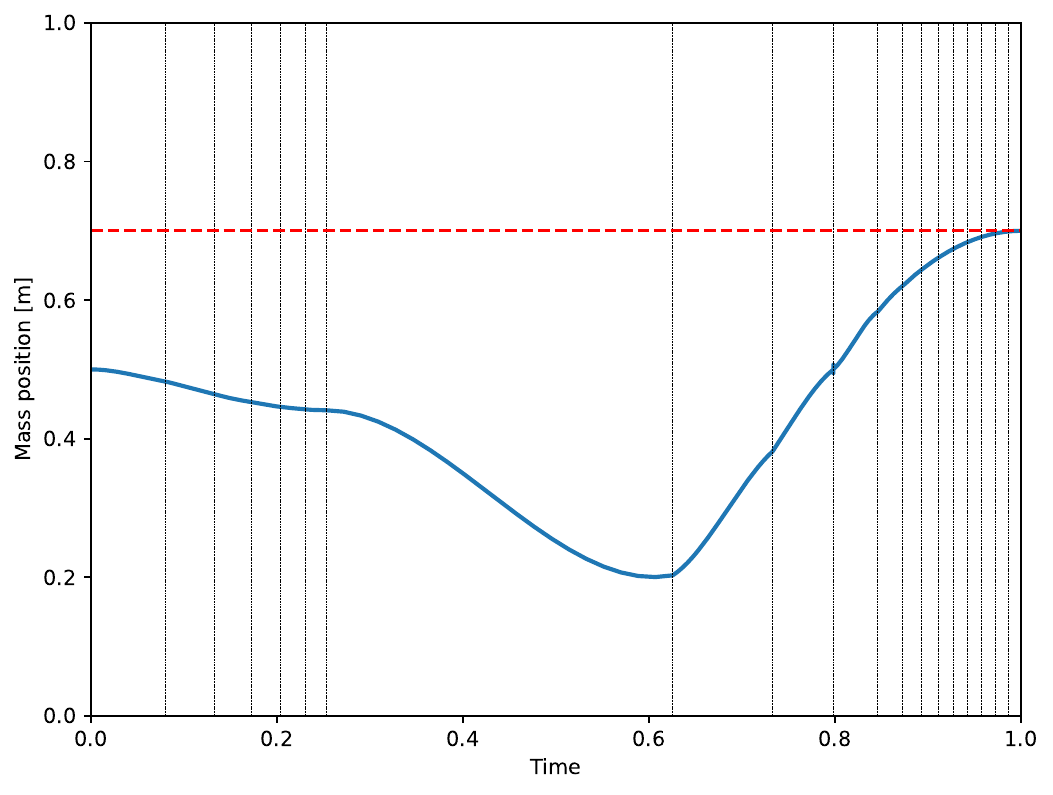}
            \caption{Optimal mass position evolution.}
            \label{fig:example_smd_paper_NAHS_spring_trajectory}
        \end{subfigure}
        \hfill
        \begin{subfigure}[t]{0.48\textwidth}
            \centering
            \includegraphics[width=\textwidth, keepaspectratio]{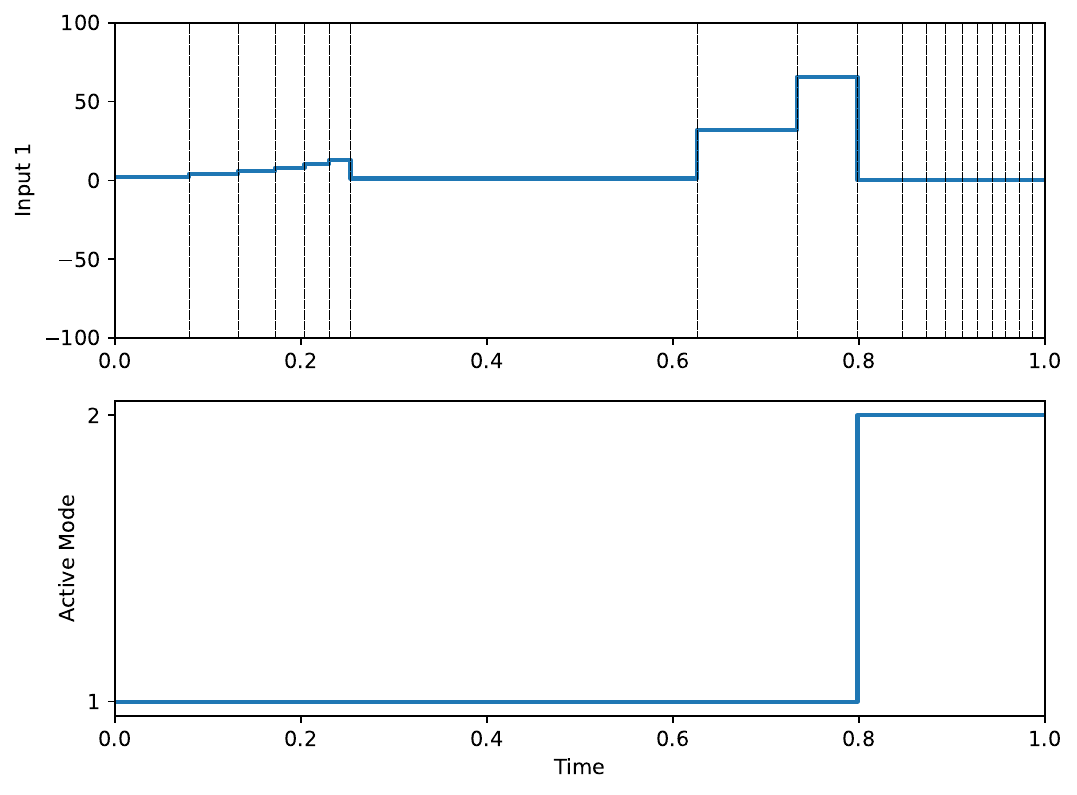}
            \caption{Optimal control input compared to the active modes in the interval $\left[t_0, t_{\textnormal{f}}\right]$.}
            \label{fig:example_smd_paper_NAHS_optimal_input}
        \end{subfigure}
        \caption{Results of the optimization: (a) Mass position evolution and (b) Control input and active modes evolution.}
        \label{fig:example_smd_paper_NAHS_combined}
    \end{figure}
    For this example, we define a switching sequence where each mode remains active for a fixed number of time intervals before transitioning 
    to the next mode. 
    This approach allows the solver to utilize multiple control inputs within the same active mode to help the system in reaching the desired state. 
    Specifically, we set $N_{\textnormal{I}}= 20$, where the first 10 modes
    correspond to the pair $\left({}^{1} A, {}^{1} B\right)$, representing the stance phase,
    and the remaining 10 modes correspond to the pair $\left({}^{2} A, {}^{2} B\right)$, representing
    the flight phase. To ensure sufficient flexibility in the optimization process, we
    choose $n_{\textnormal{c}}= 3$, allowing the solver to effectively regulating the system.
    Additionally, to maintain physical feasibility, we impose state constraints
    based on the system's phase: $y(t) \leq l_{0}$ during the stance phase to prevent
    overstretching the spring, and $y(t) \geq l_{0}$ during the flight phase to
    ensure the mass remains above the rest position. These constraints ensure realistic
    and physically meaningful optimization results.    We calculate \cref{eq:J}, \cref{eq:dJ_u}, and \cref{eq:dJ_delta} using $\tilde{Q} = 10 I_{n\times n}$, $R = 100 I_{m\times m}$, 
    and zero terminal cost.
    The desired position at the final time is enforced as a constraint, ensuring the system reaches the target state within the specified time horizon.
    
    The total solving time for the optimization problem is 17.73 seconds, with 14.71 seconds dedicated to offline computations and 3.02 seconds utilized 
    by the optimization solver.
    The increased solving time compared to the previous example is attributed to the additional constraints imposed on the problem, despite having fewer switching instants.
    The optimization process yielded a solution in which not all possible active modes were utilized, as evidenced by the results in \cref{fig:example_smd_paper_NAHS_combined}.
    
    \Cref{fig:example_smd_paper_NAHS_spring_trajectory} illustrates the spring deformation evolution, 
    obtained by applying the optimal control input $u^*$ and the optimal mode durations $\delta^*$ to the system dynamics.
    The vertical dashed lines in \cref{fig:example_smd_paper_NAHS_spring_trajectory} represent the switching instants,
    determined by the non-zero optimal values of $\delta_i$, while the red dashed line indicates the desired mass position, 
    set to $y(t_\textnormal{f}) = 0.7\,\textnormal{m}$.

    The optimal control input $u^*$, depicted in \cref{fig:example_smd_paper_NAHS_optimal_input}, is piecewise constant, 
    as expected from the problem formulation. Initially, the control input remains relatively low, 
    allowing the system to gradually approach the desired trajectory. However, during the final phase, 
    the control input increases significantly to exert the necessary force for achieving the desired position within the given time horizon. 
    This behavior aligns with the system's need to overcome the spring's resistance and reach the target position efficiently.
    From \cref{fig:example_smd_paper_NAHS_spring_trajectory}, it is evident that the spring stores energy during the stance phase 
    and releases it during the flight phase, enabling the system to reach the desired position. 
    This dynamic behavior is further highlighted in \cref{fig:example_smd_paper_NAHS_optimal_input}, 
    which shows the system's switching between the two modes. The transitions between the stance and flight phases 
    are optimized to ensure efficient energy utilization and precise trajectory tracking.
    For clarity, \cref{fig:example_smd_paper_NAHS_combined} illustrate multiple non-zero intervals. 
    However, there is only a single mode transition in reality. 
    This is because each mode requires specific control actions to satisfy the switching conditions necessary for the mass to achieve the desired position.
    Working with a physical system provides the advantage of analyzing results within a realistic context. 
    Notably, during short time intervals, represented by closely spaced black dashed lines, the control input has minimal impact on the system's state. 
    In such cases, the control input can be effectively neglected without significantly altering the system's behavior. 
    Moreover, this phenomenon does not influence the value of the cost function, ensuring the robustness of the optimization results. 
    }
    \subsection{Example 3}
    From \cref{corollary_1,corollary_2}, we state that our approach can work also with autonomous switched linear systems.
    To showcase this feature, we consider the autonomous linear switched system from \cite{stellato2017second, example} described by
    \begin{equation*}
        \dot{x}(t) = {}^i A x(t) \quad i = \{1,\, 2\}
    \end{equation*}
    
    with the following state matrices
    \begin{align*}
        {}^1 A = 
        \begin{bmatrix}
            -1 & 0 \\
            1 & 2
        \end{bmatrix}
        \quad
        {}^1 B = 
        \begin{bmatrix}
            0 \\
            0 
        \end{bmatrix},
        {}^2 A = 
        \begin{bmatrix}
            1 & 1 \\
            1 & -2
        \end{bmatrix}
        {}^2 B = 
        \begin{bmatrix}
            0 \\
            0 
        \end{bmatrix}
    \end{align*}
    
    Note that for this example, $N_\textnormal{I}$ equals 2, and both ${}^1 A$ and ${}^2 A$ describe unstable dynamics.
    We consider $t_0 = 0$, $t_\textnormal{f} = 1$s and we choose $n_\textnormal{c} = 3$.
    According to \cref{preliminaries}, the system's sequence of active modes is $\sigma = \{1,\,2,\,1,\,2,\,1,\,2\}$ with initial configuration $x_0 = [1,\,1]^\top$.
    We aim to find switching times that optimize the quadratic cost function  $\int_{t_0}^{t_\textnormal{f}}  \left(x^{\top}(t) \Tilde{Q} x(t)\right)\,  \textnormal{d}t$, where $\Tilde{Q}=I$ and the terminal cost is zero.
    The problem converged to an optimal solution quickly, producing the as a result $\tau^* = \left[0.100,\, 0.297,\, 0.433,\, 0.642,\, 0.767 \right]^\top$, which is the same obtained in \cite{stellato2017second, example}.
    
    This example demonstrates that even though our new approach considers the presence of a control input, it still works with autonomous systems. 

\section{Conclusions}
This paper presented a new framework that performs simultaneous optimization of switched linear systems over control input and switching instants.
The framework leverages a matrix exponential discretization over a partitioned time grid, parameterized through the duration of each partition. 
The paper showed that the cost function and gradient computation share many common terms that can be calculated offline before optimization, which reduces the computation cost and enhances the implementation efficiency.
Numerical integration is unnecessary for state propagation as the state evolution is already incorporated in the cost function.  
An example demonstrated the effectiveness of our approach with autonomous systems.
Another example showed that the algorithm finds optimal solutions for control inputs and switching instants in non-autonomous systems.
Future work may extend to nonlinear systems, including second-order derivatives.

\appendix

\section{Proof of Lemma \ref{lemma_1} and \ref{lemma_2}} 	\label{proof:lemma_1_2}
    \begin{proof}(Proof of Lemma \ref{lemma_1})
        Using \cref{eq:transition_matrix}, we can write $\Phi(\tau_{\ell}, \tau_a)$ as:
        \begin{equation}
            \label{eq_comp}
            \begin{split}
                &\Phi(\tau_{\ell}, \tau_a) =  
            \begin{bmatrix}
                \Phi_\textnormal{A}(\tau_{\ell}, \tau_a) & \Phi_\textnormal{F}(\tau_{\ell}, \tau_a) \\
                \underline{0} & 1
            \end{bmatrix}
            = \Phi(\tau_{\ell}, \tau_k) \Phi(\tau_k, \tau_a)  \\
            &=
            \begin{bmatrix}
                \Phi_\textnormal{A}(\tau_{\ell}, \tau_k) & \Phi_\textnormal{F}(\tau_{\ell}, \tau_k) \\
                \underline{0} & 1
            \end{bmatrix}
            \begin{bmatrix}
                \Phi_\textnormal{A}(\tau_k, \tau_a) & \Phi_\textnormal{F}(\tau_k, \tau_a) \\
                \underline{0} & 1
            \end{bmatrix} \\
            &= 
            \begin{bmatrix}
                \Phi_\textnormal{A}(\tau_{\ell}, \tau_k)\Phi_\textnormal{A}(\tau_k, \tau_a) & \Phi_\textnormal{A}(\tau_{\ell}, \tau_k)\Phi_\textnormal{F}(\tau_k, \tau_a)+ \Phi_\textnormal{F}(\tau_{\ell}, \tau_k)\\
                \underline{0} & 1
            \end{bmatrix},
            \end{split}
        \end{equation}
        where $\tau_k$ is a switching instant such that $\tau_a \leq \tau_k \leq \tau_l$.
        \rev{Recalling \cref{state}, \cref{eq_comp}, and two generic states $x_a$, $x_\ell$, one has
        \begin{equation} \label{comp}
            \begin{split}
                \overline{x}_{\ell} &= \Phi(\tau_{\ell}, \tau_a) \overline{x}_a = 
                \begin{bmatrix}
                    \Phi_\textnormal{A}(\tau_{\ell},\, \tau_k)\Phi_\textnormal{A}(\tau_{k},\, \tau_a) x_a + \Phi_\textnormal{A}(\tau_{\ell},\, \tau_k)\Phi_\textnormal{F}(\tau_{k},\, \tau_a) + \Phi_\textnormal{F}(\tau_{\ell},\, \tau_k) \\
                    1
                \end{bmatrix} ,
            \end{split}
        \end{equation}
        }
        where the first row represents the state evolution from $x_a$ to $x_\ell$.
        To complete the proof we need to rewrite explicitly $\Phi_{\textnormal{A}}$ and $\Phi_{\textnormal{F}}$ in \cref{comp}.
        Recalling \cref{phi_a}, \cref{phi_f}, and introducing an intermediate state $x_k$, we can express the state evolution from $x_a$ to $x_k$ and from $x_k$ to $x_{\ell}$ through the Lagrange formula \cref{eq:lagrange_formula}, i.e.,
        \begin{equation} \label{xi_xa}
            \begin{split}
                x_k &= \prod_{q=a}^{k-1} e^{A_q \delta_q} x_a + \sum_{q=a}^{k-1}\left(\prod_{s=q+1}^{k-1}  e^{A_s \delta_s}\right) \Phi_{\textnormal{F}_q}u_q = \Phi_\textnormal{A}(\tau_k,\, \tau_a) x_a + \Phi_\textnormal{F}(\tau_k,\, \tau_a)
            \end{split}
        \end{equation}
        
        \begin{equation} \label{xl_xi}
            \begin{split}
                x_{\ell} &= \prod_{q=i}^{\ell-1} e^{A_q \delta_q} x_k + \sum_{q=i}^{\ell-1}\left(\prod_{s=q+1}^{\ell-1} e^{A_s \delta_s}\right) \Phi_{\textnormal{F}_q}u_q = \Phi_\textnormal{A}(\tau_{\ell},\, \tau_k) x_k + \Phi_\textnormal{F}(\tau_{\ell},\, \tau_k).
            \end{split}
        \end{equation}
        
        By inserting \cref{xi_xa} into \cref{xl_xi} and performing the state augmentation, we obtain \cref{comp} which completes the proof.
    \end{proof}
    
    \begin{proof}(Proof of Lemma \ref{lemma_2})
        Differentiating the transition matrix \cref{eq:transition_matrix} w.r.t. $\delta_k \in \R$, we get
        \begin{equation} \label{chain_delta}
            \begin{split}
                \frac{\partial \Phi}{\partial \delta_k}(\tau_{k+1}, \tau_a) &= 
                \begin{bmatrix}
                    \frac{\partial \Phi_\textnormal{A}}{\partial \delta_{k}}(\tau_{k+1}, \tau_a) & \frac{\partial \Phi_\textnormal{F}}{\partial \delta_{k}}(\tau_{k+1}, \tau_a) \\
                    \underline{0} & 0
                \end{bmatrix}
                = M_k \Phi(\tau_{k+1}, \tau_a).
            \end{split}
        \end{equation}
    
        We can rewrite \cref{phi_a} using \cref{composition_rule} and \cref{eq:lagrange_formula}:
        \begin{equation} \label{phi_a_dec}
            \Phi_\textnormal{A}(\tau_{k+1},\, \tau_a) = e^{A_k\delta_k} \Phi_\textnormal{A}(\tau_k, \tau_a).
        \end{equation}
        
        Recalling and differentiating \cref{phi_a_dec} w.r.t. $\delta_k$, one has
        \begin{equation} \label{d_phi_a}
            \begin{split}
                \frac{\partial\Phi_\textnormal{A}(\tau_{k+1},\, \tau_a)}{\partial \delta_k} &= A_k e^{A_k\delta_k}\Phi_\textnormal{A}(\tau_k,\,\tau_a)
                = A_k \Phi_\textnormal{A}(\tau_{k+1},\,\tau_a);
            \end{split}
        \end{equation}
        where the property of the matrix exponential $\frac{\partial e^{Yc}}{\partial c} = Ye^{Yc}$ is used, where $Y \in \R^{n \times n}$ and $c \in R$.
        The forced evolution term \cref{phi_f} needs a few more considerations related to the autonomous evolution term.
        To understand how $\delta_k$ affects the forced evolution, we rewrite \cref{phi_f} as follows 
        \begin{equation} \label{new_phi_f}
            \Phi_\textnormal{F}(\tau_{k+1}, \tau_a) = \sum_{q=a}^{k}\left(\prod_{s=q+1}^{k} e^{A_s \delta_s}\right) \Phi_{\textnormal{F}_q} u_q = \sum_{q=a}^{k} P(q) \Phi_{\textnormal{F}_q}u_q,
        \end{equation}
        
        where $P(q) \in \R^{n \times n}$ represents the autonomous evolution of the previous modes and $\Phi_{\textnormal{F}_q}$ is the integral term.
        From \cref{d_phi_a}, we get $\frac{\partial P(q)}{\partial \delta_k} = A_k P(q)$ if $q \neq k$ and $\frac{\partial P(q)}{\partial \delta_k} =0$ if $q = k$ because $P(k) = I$.
        In addition, $\frac{\partial \Phi_{\textnormal{F}_q} u_q}{\partial \delta_k} =0$ if $q \neq k$, because it does not depend on $\delta_k$.
        Finally, we only need to compute $\frac{\partial \Phi_{\textnormal{F}_k} u_k}{\partial \delta_k}$, thus
        \begin{align} 
            \frac{\partial \Phi_{\textnormal{F}_k}}{\partial \delta_k} &= \frac{\partial}{\partial \delta_k}\left[ \int_{\tau_k}^{\delta_k+\tau_k}e^{A_k(\delta_k+\tau_k-t)}B_k dt \right] u_k \label{d_phi_f_i_eq_1}\\
            &= \int_{\tau_k}^{\delta_k+\tau_k}A_ke^{A_k(\delta_k+\tau_k-t)}B_k dt \; u_k + B_k u_k  \label{d_phi_f_i_eq_2} \\
            &= A_k \Phi_{\textnormal{F}_k}u_k + B_k u_k. \label{d_phi_f_i_eq_3}
        \end{align}
        
        In \cref{d_phi_f_i_eq_1}, we point out the dependency from $\delta_k$ by defining $\tau_{k+1} = \delta_k+\tau_k$. 
        In \cref{d_phi_f_i_eq_2}, we use the Leibniz integral rule to compute the derivative, and using \cref{eq:phi_f_i}, we rewrite the derivative in \cref{d_phi_f_i_eq_3}.
        Recalling \cref{new_phi_f}, we expand the derivative of the product $P(q) \Phi_{\textnormal{F}_q}u_q$.
        Then, recalling \cref{d_phi_a} and \cref{d_phi_f_i_eq_3}, we write \cref{d_phi_f_2}.
        \begin{align} 
            \frac{\partial \Phi_{\textnormal{F}_k}(\tau_{k+1}, \tau_a)}{\partial \delta_k} &=
            \sum_{q=a}^{k-1} \frac{\partial P(q)}{\partial \delta_k} \Phi_{\textnormal{F}_q}u_q + P(k) \frac{\partial \Phi_{\textnormal{F}_k}}{\partial \delta_k} u_k \label{d_phi_f} \\ 
            &= \sum_{q=a}^{k-1} A_k P(q)\Phi_{\textnormal{F}_q} u_q + A_k \Phi_{\textnormal{F}_k} u_k + B_k u_k \label{d_phi_f_1}\\
            &= A_k \sum_{q=a}^{k} P(q)\Phi_{\textnormal{F}_q} u_q + B_k u_k \label{d_phi_f_2}\\
            &= A_k \Phi_{\textnormal{F}_k}(\tau_{k+1}, \tau_a) + B_k u_k. \label{d_phi_f_3}
        \end{align}
        
        Rearranging the terms in \cref{d_phi_f_3}, we complete the first part of the proof.
        
        Considering the input as piecewise constant \cref{eq:piecewise_input}, the derivative of \cref{eq:transition_matrix} w.r.t. $u_{kj}$, that is the \textit{j}-th component of the control input of the \textit{k}-th mode, results in
        \begin{equation}
            \begin{split}
                \frac{\partial \Phi}{\partial u_{kj}}(\tau_{k+1}, \tau_a) &= 
                \begin{bmatrix}
                    \frac{\partial \Phi_\textnormal{A}}{\partial u_{kj}}(\tau_{k+1}, \tau_a) & \frac{\partial \Phi_\textnormal{F}}{\partial u_{kj}}(\tau_{k+1}, \tau_a) \\
                    \underline{0} & 0
                \end{bmatrix} 
                = 
                \begin{bmatrix}
                    \underline{0} & \Phi_{F_{kj}} \\
                    \underline{0} & 0
                \end{bmatrix} = H_{kj} = H_{kj} \Phi(\tau_{k+1}, \tau_a),
            \end{split}
            \label{sto}
        \end{equation}
        where $H_{kj}$ is defined in \cref{H_matrix}.
        $\frac{\partial \Phi_\textnormal{A}}{\partial u_{kj}} (\tau_{k+1}, \tau_a) = 0_{n \times n}$ because it does not depend on the control input.
        From \cref{new_phi_f}, we can decompose $\Phi_{\textnormal{F}_q}u_q$ as follows
        \begin{equation} \label{phi_f_decomp}
            \Phi_{\textnormal{F}_q}u_q = \sum_{j=1}^{m} \Phi_{\textnormal{F}_{qj}}u_{qj}.
        \end{equation}
        
        Rewriting \cref{new_phi_f} using \cref{phi_f_decomp} and differentiating it, leads us to
        \begin{equation}
            \begin{split}
                \frac{\partial \Phi_\textnormal{F}}{\partial u_{kj}}(\tau_{k+1}, \tau_a) &= \frac{\partial}{\partial u_{kj}}\sum_{q=a}^{k} P(q) \left( \sum_{j=1}^{m} \Phi_{F_{qj}}u_{qj} \right)
                = P(k) \Phi_{\textnormal{F}_{kj}} = \Phi_{\textnormal{F}_{kj}}.
            \end{split}
        \end{equation}
        
        This completes the proof.
    \end{proof}

\section{Proof of Theorem \ref{th_J}} \label{proof:th_1}
    In order to prove Theorem \ref{th_J}, we need the following lemma.
    \begin{lemma} \label{lemma_3}
        Given two switching instants $\tau_a$ and $\tau_{k+1}$ with $\tau_a \leq \tau_{k+1}$, the derivative of $S_a$ w.r.t. the switching interval, i.e., $\delta_k$, can be written as:
        \begin{equation} \label{Sa_d}
            \frac{\partial S_a}{\partial \delta_k} = \Phi(\tau_{k+1}, \tau_a)^{\top} C_k \Phi(\tau_{k+1}, \tau_a)\,,
        \end{equation}
        
        and the derivative of $S_a$ w.r.t. the control input, i.e., $u_{kj}$, can be written as:
        \begin{equation} \label{Sa_u}
            \frac{\partial S_a}{\partial u_{kj}} = \Phi(\tau_{k}, \tau_a)^{\top} D_{kj} \Phi(\tau_{k}, \tau_a) + \Phi(\tau_{k+1}, \tau_a)^{\top} N_{kj} \Phi(\tau_{k+1}, \tau_a)\,.
        \end{equation}
    \end{lemma}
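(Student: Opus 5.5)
Both formulas follow from splitting $S_a$ at the switching instants $\tau_k$ and $\tau_{k+1}$ with the composition rule of \cref{lemma_1}. Since $S_a = P_a + F_a$, \cref{lemma_1} gives $\Phi(t,\tau_a) = \Phi(t,\tau_{k+1})\Phi(\tau_{k+1},\tau_a)$ for $t\ge\tau_{k+1}$. Hence
\begin{equation*}
S_a = \frac{1}{2}\int_{\tau_a}^{\tau_{k}} \Phi(t,\tau_a)^\top Q\Phi(t,\tau_a)\,\textnormal{d}t + \Phi(\tau_k,\tau_a)^\top \Big(\frac{1}{2}\int_{\tau_k}^{\tau_{k+1}}\Phi(t,\tau_k)^\top Q\Phi(t,\tau_k)\,\textnormal{d}t\Big)\Phi(\tau_k,\tau_a) + \Phi(\tau_{k+1},\tau_a)^\top S_{k+1}\Phi(\tau_{k+1},\tau_a),
\end{equation*}
which is the unrolled form of the recursion in \cref{prep}. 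In this splitting, the first term depends only on $(u_q,\delta_q)$ with $q<k$. The tail matrix $S_{k+1}$ depends only on quantities with index $>k$, because $\Phi(t,\tau_{k+1})$ involves only later partitions. The duration $\delta_k$ and input $u_k$ therefore enter only through the middle integral and through the factor $\Phi(\tau_{k+1},\tau_a)$. I will take the derivatives with $\tau_a$ fixed and the other durations fixed, so that shifting $\delta_k$ shifts all later switching instants; this matches the convention used in \cref{lemma_2}.

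For \cref{Sa_d}, I differentiate the middle integral with Leibniz' rule. Its upper limit $\tau_{k+1}=\tau_k+\delta_k$ moves with $\delta_k$. For $t\in[\tau_k,\tau_{k+1}]$ the integrand $\Phi(t,\tau_k)$ depends on $t-\tau_k$ only, not on $\delta_k$: the affine flow within partition $k$ is $e^{M_k s}$ restricted appropriately. So this term contributes only the boundary value $\Phi(\tau_k,\tau_a)^\top \tfrac12\Phi(\tau_{k+1},\tau_k)^\top Q\Phi(\tau_{k+1},\tau_k)\Phi(\tau_k,\tau_a)$. By \cref{lemma_1} this equals $\Phi(\tau_{k+1},\tau_a)^\top\tfrac12 Q\,\Phi(\tau_{k+1},\tau_a)$. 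For the last term, \cref{lemma_2} gives $\partial_{\delta_k}\Phi(\tau_{k+1},\tau_a)=M_k\Phi(\tau_{k+1},\tau_a)$, while $S_{k+1}$ is unchanged. This contributes $\Phi(\tau_{k+1},\tau_a)^\top(M_k^\top S_{k+1}+S_{k+1}M_k)\Phi(\tau_{k+1},\tau_a)$. Adding the two contributions and comparing with \cref{C_matrix} yields \cref{Sa_d}.

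For \cref{Sa_u}, the limits of integration do not depend on $u_k$. I differentiate the middle integrand using the within-partition analogue of \cref{lemma_2}, $\partial_{u_{kj}}\Phi(t,\tau_k)=H_{kj}(t,\tau_k)$. Here $H_{kj}(t,\tau_k)$ is the column $\int_0^{t-\tau_k}e^{A_k s}B_k\,\textnormal{d}s\; e_j$ placed in the last column, as in \cref{D_matrix}. The resulting integral is exactly $D_{kj}$, conjugated by $\Phi(\tau_k,\tau_a)$. The outer factors $\Phi(\tau_k,\tau_a)$ do not depend on $u_k$. For the tail term, \cref{lemma_2} gives $\partial_{u_{kj}}\Phi(\tau_{k+1},\tau_a)=H_{kj}\Phi(\tau_{k+1},\tau_a)$, which produces the $N_{kj}$ term of \cref{N_matrix}.

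The main obstacle is bookkeeping: I must check that nothing else depends on $\delta_k$ or $u_k$. Three points need care.
\begin{itemize}
\item The factor $S_{k+1}$ is invariant under a change of $\delta_k$. This holds because its integral runs from $\tau_{k+1}$ to $t_{\textnormal{f}}$ over fixed later durations, so only its starting point translates. Equivalently, in the reformulated problem \cref{eq:finite_dim_prob_2} the horizon is $\sum_q\delta_q$.
\item The identity $H_{kj}\Phi = H_{kj}$ holds because the last row of $\Phi$ is $[\underline{0}\;1]$.
\item The Leibniz boundary term must pick up the $\tfrac12 Q$, as it does.
\end{itemize}
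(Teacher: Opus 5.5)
Your proposal is correct and follows essentially the same route as the paper's proof. You split $S_a$ at $\tau_k$ and $\tau_{k+1}$ via the composition rule of Lemma~\ref{lemma_1}, differentiate the middle integral by Leibniz' rule (its integrand depends only on $t-\tau_k$), and differentiate the tail $\Phi(\tau_{k+1},\tau_a)^\top(\cdot)\,\Phi(\tau_{k+1},\tau_a)$ with Lemma~\ref{lemma_2}. The paper treats $P_a$ and $F_a$ separately while you work with $S_{k+1}$ at once, a cosmetic difference only. Your bookkeeping is, if anything, cleaner than the paper's. You keep the factor $\tfrac12$ on $Q$, which the paper drops in its intermediate steps before recovering $C_k$. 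You also state the convention that $\tau_{k+2},\ldots,t_{\textnormal{f}}$ shift with $\delta_k$, which is what makes $S_{k+1}$ independent of $\delta_k$.
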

    \begin{proof}
        To prove \cref{Sa_d}, we first split $S_a$ into:
        \begin{equation} \label{dSa}
            \frac{\partial S_a}{\partial \delta_k} = \frac{\partial P_a}{\partial \delta_k} + \frac{\partial F_a}{\partial \delta_k}.
        \end{equation}
        
        Expanding the first term of \cref{dSa}, we get
        \begin{equation}
            \begin{split} \label{dPa}
                \frac{\partial P_a}{\partial \delta_k} &= \frac{\partial}{\partial \delta_k} \left(\int_{\tau_k}^{\tau_{k+1}} \Phi(t,\, \tau_a)^\top Q \Phi(t,\, \tau_a) \textnormal{d}t\right)
                + \frac{\partial}{\partial \delta_k} \left(\int_{\tau_{k+1}}^{t_\textnormal{f}} \Phi(t,\, \tau_a)^\top Q \Phi(t,\, \tau_a) \textnormal{d}t\right)
                = D_{\delta_k}P_a' + D_{\delta_k}P_a''
            \end{split}
        \end{equation}
        
        We notice that the integral between $\tau_a$ and $\tau_k$ does not depend on $\delta_k$ for $t \in \left[\tau_a,\, \tau_k \right]$.
        Considering the left-end term of \cref{dPa}, we can rewrite it as
        \begin{align}
            & D_{\delta_k}P_a'
            = \Phi(\tau_k,\, \tau_a)^\top \frac{\partial}{\partial \delta_k} \biggl(\int_{0}^{\delta_k} \Phi(\eta+\tau_k,\, \tau_k)^\top Q 
            \Phi(\eta+\tau_k,\, \tau_k) \textnormal{d}\eta \biggr) \Phi(\tau_k,\, \tau_a) \nonumber \\
            &= \Phi(\tau_k,\, \tau_a)^\top \Phi(\tau_{k+1},\, \tau_k)^\top Q \Phi(\tau_{k+1},\, \tau_k) \Phi(\tau_k,\, \tau_a)
            = \Phi(\tau_{k+1},\, \tau_a)^\top Q \Phi(\tau_{k+1},\, \tau_a), \label{dPa_1}
        \end{align}
        using Lemma \ref{lemma_1}, we highlight that $\Phi(\tau_k,\, \tau_a)$ does not depend on $\delta_k$ and its derivative is zero.
        Moreover, we use the change of variables $\eta = t-	\tau_k$ to point out the dependency on $\delta_k$.
        Next, we rewrite the outermost term of \cref{dPa} as follows
        \begin{equation}
            \begin{split}
                \label{dPa_2_1}
                &D_{\delta_k}P_a''
                = \frac{\partial}{\partial \delta_k} \biggl(\int_{\tau_{k+1}}^{t_\textnormal{f}} \Phi(\tau_{k+1},\, \tau_a)^\top \Phi(t,\, \tau_{k+1})^\top Q
                \Phi(t,\, \tau_{k+1}) \Phi(\tau_{k+1},\, \tau_a) \textnormal{d}t \biggr)\,.
            \end{split}
        \end{equation}
        Starting from \cref{dPa_2_1}, using Lemma \ref{lemma_1} we can take $\Phi(\tau_{k+1},\, \tau_a)$ outside of the integral and use the definition of $P_{k+1}$ in \cref{P_matrix} to obtain \cref{dPa_2_2}.
        \begin{align}
            &\frac{\partial}{\partial \delta_k} \left(\ \Phi(\tau_{k+1},\, \tau_a)^\top P_{k+1} \Phi(\tau_{k+1},\, \tau_a) \right) \label{dPa_2_2} \\
            &= \Phi(\tau_{k+1},\, \tau_a)^\top M_k^\top P_{k+1} \Phi(\tau_{k+1},\, \tau_a) \label{dPa_2_3} \\
            &\quad \quad \quad + \Phi(\tau_{k+1},\, \tau_a)^\top P_{k+1} M_k \Phi(\tau_{k+1},\, \tau_a) \nonumber \\
            &= \Phi(\tau_{k+1},\, \tau_a)^\top \left(M_k^\top P_{k+1} + P_{k+1} M_k \right) \Phi(\tau_{k+1},\, \tau_a). \label{dPa_2_4}
        \end{align}
        
        It is worth noting from \cref{dPa_2_3} to \cref{dPa_2_4} we use Lemma \ref{lemma_2}.
        Using \cref{dPa_1} and \cref{dPa_2_4}, we rewrite \cref{dPa} as
        \begin{equation} \label{dPa_new}
            \frac{\partial P_a}{\partial \delta_k} = \Phi(\tau_{k+1},\, \tau_a)^\top \left(Q + M_k^\top P_{k+1} + P_{k+1} M_k \right) \Phi(\tau_{k+1},\, \tau_a).
        \end{equation}
        
        Recalling \cref{F_matrix}, $\frac{\partial F_a}{\partial \delta_k}$ can be expressed as
        \begin{align}
            \frac{\partial F_a}{\partial \delta_k} &= \frac{\partial}{\partial \delta_k} \biggl(\Phi(\tau_{k+1},\, \tau_a)^\top \Phi(t_\textnormal{f},\, \tau_{k+1})^\top E 
            \Phi(t_\textnormal{f},\, \tau_{k+1}) \Phi(\tau_{k+1},\, \tau_a) \biggr)  \label{dFa_1}  \\
            &= \frac{\partial}{\partial \delta_k} \biggl(\Phi(\tau_{k+1},\, \tau_a)^\top F_{k+1} \Phi(\tau_{k+1},\, \tau_a) \biggr) \label{dFa_3} \\ &
            = \Phi(\tau_{k+1},\, \tau_a)^\top \left[M_k^\top F_{k+1} + F_{k+1} M_k \right] \Phi(\tau_{k+1},\, \tau_a), \label{dFa_2}
        \end{align}
        where in \cref{dFa_1} we highlight the state evolution from $\tau_a$ to $\tau_{k+1}$, which is the only term that depends on $\delta_k$.
        At the end, by adding \cref{dPa_new}, \cref{dFa_2}, and exploiting \cref{definition_1}, we obtain
        \begin{equation}
            \frac{\partial S_a}{\partial \delta_k} = \Phi(\tau_{k+1},\, \tau_a)^\top \left(Q + M_k^\top S_{k+1} + S_{k+1} M_k \right) \Phi(\tau_{k+1},\, \tau_a).
        \end{equation}
        
        Recalling \cref{definition_2}, we get \cref{Sa_d}.
        
        Similarly, we can obtain the proof of \cref{Sa_u} by splitting $S_a$ into two terms
        \begin{equation} \label{dSa_u}
            \frac{\partial S_a}{\partial u_{kj}} = \frac{\partial P_a}{\partial u_{kj}} + \frac{\partial F_a}{\partial u_{kj}}.
        \end{equation}
        
        Likewise in \cref{dPa}, we split the equation into two integrals
        \begin{equation}
            \begin{split} \label{dPa_u}
                \frac{\partial P_a}{\partial u_{kj}} &= \frac{\partial}{\partial u_{kj}} \left(\int_{\tau_k}^{\tau_{k+1}} \Phi(t,\, \tau_a)^\top Q \Phi(t,\, \tau_a) \textnormal{d}t\right)
                + \frac{\partial}{\partial u_{kj}} \left(\int_{\tau_{k+1}}^{t_\textnormal{f}} \Phi(t,\, \tau_a)^\top Q \Phi(t,\, \tau_a) \textnormal{d}t\right) \\
                &= D_{u_{kj}}P_a' + D_{u_{kj}}P_a''
            \end{split}
        \end{equation}
        
        The state evolution from $\tau_a$ to $\tau_k$ does not depend on $u_{kj}$ and its derivative is zero.
        The first term of \cref{dPa_u} can be expressed as follows
        \begin{align}
            &D_{u_{kj}}P_a'
            = \Phi(\tau_k,\, \tau_a)^\top \int_{\tau_k}^{\tau_{k+1}} \frac{\partial}{\partial u_{kj}} \biggl(\Phi(t,\, \tau_k)^\top Q 
            \Phi(t,\, \tau_k) \biggr) \textnormal{d}t \; \Phi(\tau_k,\, \tau_a) \label{dPa_u1_new}
        \end{align}
        Recalling \cref{dPa_u1_new} and Definition \ref{definition_2}, performing mathematical computation lead to
        \begin{equation}
            \label{dPa_u_1}
            D_{u_{kj}}P_a' = \Phi(\tau_{k},\, \tau_a)^\top D_{kj} \Phi(\tau_{k},\, \tau_a). 
        \end{equation}
        
        We can reformulate \cref{dPa_u2_new} using Lemma \ref{lemma_1}.
        Then, we differentiate w.r.t. $u_{kj}$ applying Lemma \ref{lemma_2} and we obtain \cref{dPa_u_2}.
        \begin{align}
            D_{u_{kj}}P_a''
            &= \frac{\partial}{\partial u_{kj}} \biggl(\Phi(\tau_{k+1},\, \tau_a)^\top \int_{\tau_{k+1}}^{t_\textnormal{f}}  \Phi(t,\, \tau_{k+1})^\top Q 
            \Phi(t,\, \tau_{k+1}) \textnormal{d}t \; \Phi(\tau_{k+1},\, \tau_a) \biggr) \label{dPa_u2_new}\\
            &= \Phi(\tau_{k+1},\, \tau_a)^\top \left(H_{kj}^\top P_{k+1} + P_{k+1} H_{kj} \right) \Phi(\tau_{k+1},\, \tau_a) \label{dPa_u_2}
        \end{align}
        
        Recalling \cref{dPa_u_1} and \cref{dPa_u_2}, we can rewrite \cref{dPa_u} as 
        \begin{equation} \label{dPa_new_u}
            \begin{split}
                &\frac{\partial P_a}{\partial u_{kj}} = \Phi(\tau_{k},\, \tau_a)^\top D_{kj} \Phi(\tau_{k},\, \tau_a) + \Phi(\tau_{k+1},\, \tau_a)^\top
                \left(H_{kj}^\top P_{k+1} + P_{k+1} H_{kj} \right) \Phi(\tau_{k+1},\, \tau_a).
            \end{split}
        \end{equation}
        
        Thus, recalling \cref{dSa_u}, we can rewrite $\frac{\partial F_a}{\partial u_{kj}}$ as in \cref{dFa_u_1}, exploiting Lemma \ref{lemma_1} and the definition of $F_{k+1}$ from \cref{F_matrix}.
        Hence, we obtain \cref{dFa_u_2} by performing the differentiation and exploiting Lemma \ref{lemma_2}.
        \begin{align}
            &\frac{\partial F_a}{\partial u_{kj}} = \frac{\partial}{\partial u_{kj}} \biggl(\Phi(\tau_{k+1},\, \tau_a)^\top \Phi(t_\textnormal{f},\, \tau_{k+1})^\top E
            \Phi(t_\textnormal{f},\, \tau_{k+1}) \Phi(\tau_{k+1},\, \tau_a) \biggr) \label{dFa_u_1} \\
            &= \frac{\partial}{\partial u_{kj}} \biggl(\Phi(\tau_{k+1},\, \tau_a)^\top F_{k+1} \Phi(\tau_{k+1},\, \tau_a) \biggr) \label{dFa_u_2}  \\
            &= \Phi(\tau_{k+1},\, \tau_a)^\top \left(H_{kj}^\top F_{k+1} + F_{k+1} H_{kj} \right) \Phi(\tau_{k+1},\, \tau_a), \label{dFa_u}
        \end{align}
        
        Finally, adding \cref{dPa_new_u} and \cref{dFa_u}, one has
        \begin{equation} \label{dSa_du_proof}
            \begin{split}
                &\frac{\partial S_a}{\partial u_{kj}} = \Phi(\tau_{k},\, \tau_a)^\top D_{kj} \Phi(\tau_{k},\, \tau_a)
                + \Phi(\tau_{k+1},\, \tau_a)^\top \left(H_{kj}^\top P_{k+1} + P_{k+1} H_{kj} \right) \Phi(\tau_{k+1},\, \tau_a) \\
                &+ \Phi(\tau_{k+1},\, \tau_a)^\top \left(H_{kj}^\top F_{k+1} + F_{k+1} H_{kj} \right) \Phi(\tau_{k+1},\, \tau_a). 
            \end{split}
        \end{equation}		
        
        Using Definition \ref{definition_1} and \ref{definition_2} we complete the proof.	  
    \end{proof}
    
    \begin{proof}(Theorem \ref{th_J})
        From \cref{eq:J}, we substitute the definition of $G_0$, i.e., \cref{G}, and $S_0$, i.e., \cref{S_matrix}, into \cref{eq:obj_finite_dim_prob_2}.
        At this point, the proof is completed.
    \end{proof}
    
\section{Proof of Theorem \ref{th_dJ}} \label{proof:th_2}
    Recalling Lemma \ref{lemma_3} and \cref{state}, we can rewrite \cref{eq:dJ_u} as
    \begin{align}
        \frac{\partial J(u, \delta)}{\partial u_{kj}} &= \frac{\partial G_0}{\partial u_{kj}} + \overline{x}_0^{\top} \frac{\partial S_0}{\partial u_{kj}} \overline{x}_0  \nonumber\\
        &= u_{kj} R_{jj} \delta_k + \overline{x}_0^{\top} \biggl( \Phi(\tau_{k},\,\tau_0)^{\top} D_{kj} \Phi(\tau_{k},\,\tau_0) 
        + \Phi(\tau_{k+1},\,\tau_0)^{\top} N_{kj} \Phi(\tau_{k+1},\,\tau_0) \biggr) \overline{x}_0 \nonumber \\
        &= u_{kj} R_{jj} \delta_k + \overline{x}_{k}^{\top} D_k \overline{x}_{k} + \overline{x}_{k+1}^{\top} N_\textnormal{k} \overline{x}_{k+1}. \label{proof_dJ_u}
    \end{align}
    
    Next, recalling \cref{state}, \cref{second_term}, and Lemma \ref{lemma_3}, we aim to prove \cref{eq:dJ_delta}, i.e.,
    \begin{align}
        \frac{\partial J(u, \delta)}{\partial \delta_k} &= \frac{\partial G_0}{\partial \delta_k} + \overline{x}_0^{\top} \frac{\partial S_0}{\partial \delta_k} \overline{x}_0 \nonumber\\
        &= u_k^\top R u_k + \overline{x}_0^{\top} \Phi(\tau_{k+1},\,\tau_0)^{\top} C_k \Phi(\tau_{k+1},\,\tau_0) \overline{x}_0 \nonumber\\
        &= u_k^\top R u_k + \overline{x}_{k+1}^{\top} C_k \overline{x}_{k+1}. \label{proof_dJ_d}
    \end{align}
    
    This completes the proof. \hfill $\blacksquare$
    
\section{Proof of Preposition \ref{prep}} \label{proof_prep}
Recalling Definition \ref{definition_2} and setting $i = N+1$, we obtain \cref{SN}.
The recursive equation in \cref{Si} can be derived using Definition \ref{definition_2} as follows:
\begin{align}
    S_k &= \int_{\tau_k}^{\tau_{k+1}} \Phi(t,\, \tau_k)^\top Q \Phi(t,\, \tau_k) \textnormal{d}t
    + \Phi(\tau_{k+1},\, \tau_k)^\top \left[\int_{\tau_{k+1}}^{t_\textnormal{f}} \left( \Phi(t,\, \tau_k)^\top Q \Phi(t,\, \tau_k)  \right) \textnormal{d}t
    + \Phi(t_{\textnormal{f}}, \tau_{k+1})^\top E \Phi(t_{\textnormal{f}}, \tau_{k+1})\right] \Phi(\tau_{k+1},\, \tau_k) \label{proof_Si_1} \\
    &= \int_{0}^{\delta_k} \Phi(\eta+\tau_k,\, \tau_k)^\top Q \Phi(\eta+\tau_k,\, \tau_k) \textnormal{d}\eta + \Phi(\tau_{k+1},\, \tau_k)^\top S_{k+1} \Phi(\tau_{k+1},\, \tau_k) \label{proof_Si_2}
\end{align}

In \cref{proof_Si_1}, we can recognize $S_{k+1}$ by highlighting the matrices $\Phi(\tau_{k+1},\, \tau_k)^\top$ and $\Phi(\tau_{k+1},\, \tau_k)$ and by putting them outside the integral because they do not depend on $t$.
Then, in \cref{proof_Si_2}, we apply the change of variables $\eta = t - \tau_k$ and recalling Definition \ref{definition_2} the proof is completed. \hfill $\blacksquare$



\bibliographystyle{elsarticle-num}
\bibliography{refs}


\end{document}